\RequirePackage{fix-cm}
\documentclass[smallextended]{svjour3}

\smartqed

\usepackage{amsmath,amssymb,graphicx,booktabs,placeins,float}
\usepackage[numbers,sort&compress]{natbib}
\usepackage{hyperref}
\graphicspath{{figures/}}

\makeatletter
\def\fps@figure{!t}
\def\fps@table{!t}
\makeatother
\providecommand{\doi}[1]{\href{https://doi.org/#1}{\nolinkurl{https://doi.org/#1}}}

\newcommand{\E}{\mathbb{E}}
\newcommand{\bx}{\mathbf{x}}
\newcommand{\norm}[1]{\left\lVert#1\right\rVert}

\makeatletter
\newcounter{algorithm}
\renewcommand{\thealgorithm}{\arabic{algorithm}}

\newcommand{\algorithmname}{Algorithm}
\def\fps@algorithm{t}
\def\ftype@algorithm{4}
\def\ext@algorithm{loa}
\def\fnum@algorithm{\algorithmname~\thealgorithm}
\newenvironment{algorithm}{\@float{algorithm}}{\end@float}
\makeatother

\hypersetup{
  bookmarksdepth=3,
  hidelinks,
  pdfborder={0 0 0}
}

\journalname{Journal of Scientific Computing}

\makeatletter
\renewcommand{\makeheadbox}{}
\makeatother

\begin{document}

\title{Residual-Christoffel Sampling for Random Feature Collocation of Linear PDEs}
\titlerunning{Residual-Christoffel Sampling for RFM Collocation}

\author{Jiale Linghu \and Yangshuai Wang}
\authorrunning{J. Linghu and Y. Wang}

\institute{J. Linghu \at
School of Mathematics and Statistics, Xidian University, Xi'an, China
\and
Y. Wang \at
Department of Mathematics, National University of Singapore, Singapore\\
\email{yswang@nus.edu.sg}}

\date{}

\maketitle

\begin{abstract}
Random feature collocation fixes a randomly generated trial space and determines its coefficients from
a linear least-squares system. Stability then depends on whether the sampled residual equations
represent the geometry induced by the differential operator. We construct an operator-aware
discretization in which the operator-applied features determine both the collocation measure and a
coefficient whitening map. The randomized scheme combines a residual-Christoffel density with
inverse-density weights, while a deterministic scalar-row alternative maximizes successive
regularized log-determinant increments. Conditional on the realized trial space, the sampled whitened
interior Gram is a spectral approximation to the reference Gram on the retained residual space, with
sample complexity linear in the retained dimension up to a logarithmic factor. For uniformly analytic
residual kernels, the associated operator has stretched-exponentially decaying eigenvalues and
ridge effective dimension that is polylogarithmic in the inverse ridge scale. Experiments on scalar and vector equations, varied
geometries, and one to three spatial dimensions show that residual-space sampling and whitening
produce numerically full-rank transformed systems with substantially smaller condition numbers and
iteration counts. The deterministic construction attains the lowest errors at the smallest scalar
sample sizes.
Residual-space geometry therefore yields a principled design for stable strong-form random feature
collocation.
\keywords{random feature method \and operator-residual sampling \and Christoffel sampling \and partial differential equation collocation \and randomized numerical linear algebra \and effective dimension}
\subclass{65N35 \and 65F35 \and 62K05 \and 65D15 \and 68W20}
\end{abstract}

\section{Introduction}
Random feature method (RFM) collocation converts a linear partial differential equation (PDE) into a
linear least-squares problem once the trial functions have been drawn. The fixed trial space makes the
coefficient solve explicit, but it also exposes a discretization question: which residual equations
represent the action of the differential operator on that space? Geometric coverage of the domain can
miss weakly observed residual directions and produce a poorly conditioned empirical norm. We address
this problem by constructing the collocation measure, equation weights, and coefficient coordinates
from the geometry of the operator-applied trial functions.

Let $\Omega$ be the computational domain, $L$ the interior differential operator, $B$ the boundary
operator, and $f$ and $g$ the corresponding data in
$Lu=f$ in $\Omega$ and $Bu=g$ on $\partial\Omega$. For fixed random trial functions
$\{\varphi_m\}_{m=1}^M$, write
\[
  u_M(\bx)=\sum_{m=1}^M c_m\varphi_m(\bx),\qquad
  \psi(\bx)=\big(L\varphi_1(\bx),\ldots,L\varphi_M(\bx)\big)^\top,
\]
where $M$ is the number of features, $c_m$ are the unknown coefficients, $\bx\in\Omega$, and
${}^\top$ denotes transpose. An interior point $\bx_i$ contributes the row $\psi(\bx_i)^\top$;
boundary rows are obtained by applying $B$ at points on $\partial\Omega$. The Gram matrix generated by
$\psi$ describes the residual directions that the collocation equations must resolve.

RFM collocation has been developed for stationary and time-dependent PDEs
\cite{ChenE2022RFM,ChenLuo2023TimeRFM}. Related fixed-feature solvers include physics-informed extreme
learning machines (ELMs) \cite{DwivediSrinivasan2020PIELM}, local and high-dimensional ELMs
\cite{DongLi2021LocELM,WangDong2024HighDimELM}, and local randomized neural-network discontinuous
Galerkin methods \cite{SunDongWang2024LRNNDG}. Strong-form RFM also shares fixed trial functions and
collocated operator evaluations with Kansa-type and radial-basis-function methods
\cite{Kansa1990MQII,Fasshauer1997RBFPDE,HonSchaback2001}. Across these settings, stable coefficient
recovery requires the discrete residual norm to capture the relevant trial-space directions.

More broadly, operator- and regime-adapted classical solvers show how numerical performance can
improve when the discretization reflects the PDE structure, as in uniformly accurate treatments of
highly oscillatory Dirac equations and nonuniform-FFT methods for nonlocal Schr\"odinger interactions
\cite{BaoCaiJiaTang2016MTI,BaoCaiJiaTang2017Dirac,BaoJiangTangZhang2015NUFFT}. Our construction
brings the same design principle to the selection and scaling of residual equations in fixed
random-feature spaces.

Existing point-selection methods encode different information. Geometric and low-discrepancy rules
cover the physical domain. Residual-adaptive physics-informed neural network (PINN) methods refine
regions where the current approximation violates the equation
\cite{LuDeepXDE2021,Wu2023PINNSampling,GaoYanZhou2023FIPINN}. PDE-greedy kernel methods select operator
functionals through power-function or current-error criteria \cite{WenzelPDEgreedy2025}. Optimal
weighted least squares and Christoffel sampling provide trial-space-aware designs for regression
\cite{CohenMigliorati2017,NarayanJakeman2017,HamptonDoostan2015,Adcock2024}. The Christoffel Sampling
for Machine Learning (CS4ML) framework further permits general linear observations, including
differential observations, and develops Christoffel adaptive sampling for PINNs \cite{CS4ML2023}.
Christoffel adaptive sampling has also been paired with sparse random-feature expansions for
multivariate function approximation, where the sampling density is formed from the feature values and
the features are subsequently sparsified \cite{CASSRFE2026}.

The fixed linear structure of RFM collocation permits a direct operator-residual construction. In the
PINN realization of CS4ML, the computable surrogate used for interior sampling omits the differential
operator \cite{CS4ML2023}. Here every feature $L\varphi_m$ is explicit, so a
reference quadrature determines the operator-residual Gram, its retained eigenspace, and the associated
leverage scores before the coefficient solve. The same Gram defines a residual-Christoffel sampling
density and a coefficient whitening map. This coupling targets the reference residual norm over all
retained directions and yields a sampling measure that can be reused
for right-hand sides sharing the operator, domain, and fixed trial space. Randomized least-squares
preconditioners, including count-sketch preconditioning of the random-feature system itself, instead
act on an assembled row set with a fixed point set
\cite{AvronToledo2010,MengSaundersMahoney2014,TanChen2026Precond},
while feature filtering changes the retained columns and hence the approximation space
\cite{RRQR2025}.

For a reference probability measure $\mu$ on $\Omega$, define
\[
  G=\E_{\mu}[\psi(\bx)\psi(\bx)^\top],\qquad
  K_M^L(\bx)=\psi(\bx)^\top G_r^\dagger\psi(\bx),
\]
where $\E_\mu$ denotes expectation with respect to $\mu$, $G$ is the population residual Gram, $r$ is
the retained residual rank, $G_r^\dagger$ is its rank-$r$ truncated spectral inverse, and $K_M^L$ is the
residual-Christoffel function. The randomized method samples from the density $K_M^L/r$ relative to
$\mu$ and uses
inverse-density weights together with the whitening map derived from $G$. For scalar residual rows, a
deterministic companion selects the candidate giving the largest regularized log-determinant
increment. The analysis establishes retained-space Gram concentration and conditioning. For uniformly
analytic residual kernels, it further connects the effective residual dimension to spectral decay of
the associated random-feature operator.

The main contributions are as follows.
\begin{enumerate}
\item We construct residual-Christoffel sampling, inverse-density weighting, and a matching whitening
map from the fixed operator-residual features, together with deterministic leverage-greedy selection.
\item We prove retained-space Gram concentration and conditioning, and relate the sampling dimension
to the spectrum and ridge effective dimension of the random-feature residual-kernel operator.
\item Numerical experiments across scalar and vector PDEs, varied geometries, and one to three
dimensions demonstrate improved conditioning, faster iterative solution, and the lowest small-sample
errors from greedy selection in the scalar tests.
\end{enumerate}

The remainder of this paper is organized as follows. Section~\ref{sec:form} formulates the
residual-space discretization, Section~\ref{sec:algorithms}
develops the sampling algorithms, and Section~\ref{sec:analysis} gives the conditioning and spectral
analysis. Section~\ref{sec:results} presents the numerical experiments, followed by the conclusions
and outlook in Section~\ref{sec:discussion}.

\section{Problem formulation and residual-space discretization}\label{sec:form}
We distinguish the reference residual Gram, sampled interior Gram, boundary-augmented least-squares
system, and coefficient map because they govern sampling, assembly, and solver coordinates separately.

\subsection{Model problem and fixed-feature trial space}
Let $\Omega\subset\mathbb R^d$ be a bounded computational domain, where $\mathbb R$ denotes the real
numbers and $d$ is the spatial dimension, and let $L$ and $B$ be linear
interior and boundary operators, respectively. We consider
\begin{equation}\label{eq:model-problem}
  Lu=f\quad\text{in }\Omega,\qquad Bu=g\quad\text{on }\partial\Omega,
\end{equation}
for the unknown solution $u$, using scalar notation. Systems of equations are handled by stacking component equations and their
coefficient blocks, so one physical point may contribute several equation rows, as in the elasticity
experiments. Let $\mu$ be a reference probability measure on
$\Omega$ and let $\nu$ be a reference probability measure on $\partial\Omega$. Unless stated
otherwise, these are normalized volume and boundary reference measures. The positive integer $M$ is
the number of realized trial functions, and the analysis is conditional on the fixed realization
$\{\varphi_m\}_{m=1}^M$. We assume that $L\varphi_m\in L^2(\mu)$ and
$B\varphi_m\in L^2(\nu)$, with pointwise representatives on the collocation sets, and that the data
$f$ and $g$ can be evaluated at the corresponding points. Here $L^2(\mu)$ and $L^2(\nu)$ are the
usual spaces of square-integrable functions with respect to the indicated measures.

Throughout, $\|\cdot\|_2$ denotes the Euclidean norm for vectors and the spectral norm for matrices;
${}^\top$ denotes transpose, $I_k$ denotes the $k\times k$ identity matrix, and $\E$ and $\Pr$
denote expectation and probability.

In computations, $\mu$ is represented by a probability-normalized quadrature with $Q$ nodes
$\tilde{\bx}_q$ and nonnegative weights $w_q$,
$\mathcal Q=\{(\tilde{\bx}_q,w_q)\}_{q=1}^Q$, where $\sum_qw_q=1$, so that, for an integrable
function $h$, $\E_\mu h(\bx)$ is approximated by
$\sum_qw_qh(\tilde{\bx}_q)$. When a boundary Gram is required, an independent quadrature with
$Q_b$ boundary nodes $\tilde y_q$ and weights $w_q^b$ represents $\nu$:
$\mathcal Q_b=\{(\tilde y_q,w_q^b)\}_{q=1}^{Q_b}$ with $\sum_qw_q^b=1$. All expectations below are
over spatial variables after the feature realization has been fixed.

The RFM trial space is
\begin{equation}\label{eq:rfm-trial-space}
  V_M=\mathrm{span}\{\varphi_1,\ldots,\varphi_M\},\qquad
  u_M(\bx)=\sum_{m=1}^M c_m\varphi_m(\bx).
\end{equation}
For the fixed trial space \eqref{eq:rfm-trial-space}, define the operator-residual feature vector
\begin{equation}\label{eq:operator-residual-vector}
  \psi(\bx)=\big(L\varphi_1(\bx),\ldots,L\varphi_M(\bx)\big)^\top .
\end{equation}

\subsection{Interior residual Gram and sampling objective}
For symmetric matrices $C$ and $D$, write $C\preceq D$ when $D-C$ is positive semidefinite (PSD).
The interior residual Gram associated with \eqref{eq:operator-residual-vector} is
\begin{equation}\label{eq:interior-residual-gram}
  G:=\E_{\bx\sim\mu}[\psi(\bx)\psi(\bx)^\top]\succeq0.
\end{equation}
The Gram in \eqref{eq:interior-residual-gram} is the default object used for residual-space sampling.
Boundary information forms a separate block, and the augmented Gram below provides full
interior-plus-boundary conditioning. Assume $G\ne0$, and let
$(\lambda_j,v_j)$ denote its eigenpairs, ordered so that the eigenvectors are orthonormal and
$\lambda_1\ge\cdots\ge\lambda_M\ge0$. The zero-Gram case contains no interior residual information
and is excluded.

The analysis allows any fixed retained rank $r$ satisfying $1\le r\le\operatorname{rank}(G)$. This
rank is chosen after the feature realization and reference Gram are fixed but before the collocation rows are
sampled. In the computations, the following gap-aware
numerical rule is used. First set
\begin{equation}\label{eq:rank-selection-rule}
\begin{aligned}
  r_0&=\#\{j:\lambda_j>10^{-10}\lambda_1\},\\
  \mathcal J(r_0)&=\big\{j:\max(1,r_0-4)\le j\le\min(M-1,r_0+4),\ \lambda_j>0\big\},\\
  r&=\begin{cases}
  \displaystyle\min\operatorname*{arg\,max}_{j\in\mathcal J(r_0)}
       \frac{\lambda_j}{\lambda_{j+1}},
       &2\le r_0\le M-1,\\[2mm]
  r_0, &r_0\in\{1,M\}.
  \end{cases}
\end{aligned}
\end{equation}
Here $\#$ denotes the cardinality of a finite set.
When $\lambda_j>0=\lambda_{j+1}$, the ratio in \eqref{eq:rank-selection-rule} is interpreted as
$+\infty$. The minimum fixes a tie convention. The analysis requires a fixed $r$ with
$\lambda_r>0$. For $M=1$, the second branch gives $r=1$.
After the rank is chosen, define the truncated inverse and whitening map by
\begin{equation}\label{eq:truncated-whitening}
  \begin{aligned}
  G_r^\dagger&=\sum_{j=1}^r\lambda_j^{-1}v_jv_j^\top,\\
  T_r&=[v_1/\sqrt{\lambda_1},\dots,v_r/\sqrt{\lambda_r}]
      \in\mathbb R^{M\times r},\\
  \psi^\perp(\bx)&=T_r^\top\psi(\bx)\in\mathbb R^r .
  \end{aligned}
\end{equation}
Then $G_r^\dagger=T_rT_r^\top$ and $T_r^\top GT_r=I_r$, so
$\E_\mu[\psi^\perp\psi^{\perp\top}]=I_r$ in the retained eigencoordinates. The corresponding
residual-Christoffel function is
\begin{equation}\label{eq:residual-christoffel-function}
  K_M^L(\bx)=\psi(\bx)^\top G_r^\dagger\psi(\bx)=\|\psi^\perp(\bx)\|_2^2,
  \qquad \E_\mu[K_M^L]=r .
\end{equation}
The randomized sampling rule uses \eqref{eq:residual-christoffel-function} directly. The deterministic rule
uses the same operator-residual rows but updates their leverage metric as points are selected. Points
where $K_M^L=0$ contain no information in the retained coordinates and receive zero probability in the
randomized rule.

For a system with total coefficient dimension $M_{\rm sys}$ and $s$ equation rows per physical point,
collect the residual rows in $\Psi(\bx)\in\mathbb R^{s\times M_{\rm sys}}$. The same construction applies after replacing
$\psi\psi^\top$ by $\Psi^\top\Psi$ in $G$ and replacing $K_M^L$ by
\[
  K_{M,\mathrm{sys}}^L(\bx)
  =\operatorname{tr}\!\big(\Psi(\bx)G_r^\dagger\Psi(\bx)^\top\big)
  =\|\Psi(\bx)T_r\|_F^2,
\]
where $\|\cdot\|_F$ is the Frobenius norm. In particular,
\[
  \E_\mu[K_{M,\mathrm{sys}}^L]
  =\operatorname{tr}(T_r^\top G T_r)=r.
\]
Sampling a point assigns the same inverse-Christoffel weight to all of its equation rows, and its
empirical Gram contribution is
\[
  \omega_iT_r^\top\Psi(\bx_i)^\top\Psi(\bx_i)T_r.
\]
The resulting point contribution is PSD and its spectral norm is
bounded by its trace, so the concentration argument below retains the same $r/N$ summand bound. This
block formulation is used for the elasticity experiment, where $M_{\rm sys}=2M$ and $s=2$.
The population and finite-candidate formulas below apply unchanged after replacing each scalar outer
product by its block counterpart.

For selected interior points or point blocks $\{(\bx_i,\omega_i)\}_{i=1}^{N_i}$, let $\omega_i>0$ denote the sampling
weight before the common assembly normalization introduced below. The empirical whitened Gram is
\begin{equation}\label{eq:empirical-whitened-gram}
  \widehat G^\perp
  = \sum_{i=1}^{N_i}\omega_i\,
  \psi^\perp(\bx_i)\psi^\perp(\bx_i)^\top .
\end{equation}
The sampling target is therefore $\widehat G^\perp\approx I_r$ on the retained operator-residual
coordinates. Thus row selection constructs a weighted empirical approximation of the reference
residual Gram rather than only a geometric covering of $\Omega$. The immediate objective is algebraic
conditioning on the retained interior block. Solution accuracy is evaluated separately. The concentration
analysis uses the sampling weights $\omega_i$; the normalized weights $\bar\omega_i$ below are reserved
for the assembled PDE objective.

\subsection{Boundary block and assembled least-squares system}
The actual PDE discretization also contains boundary equations. For interior points
$\{\bx_i\}_{i=1}^{N_i}$ with $N_i\ge1$, normalize the sampling weights by
\[
  \bar\omega_i=\frac{\omega_i}{\sum_{s=1}^{N_i}\omega_s},
  \qquad \sum_{i=1}^{N_i}\bar\omega_i=1.
\]
Let $\{y_j\}_{j=1}^{N_b}$, with $N_b\ge1$, be the boundary points and let $\eta_j>0$ satisfy
$\sum_{j=1}^{N_b}\eta_j=1$. The assembled objective is
\begin{equation}\label{eq:weighted-ls-objective}
  \begin{aligned}
  \min_{c\in\mathbb R^M}\quad &
  \sum_{i=1}^{N_i}\bar\omega_i\,\big(Lu_M(\bx_i)-f(\bx_i)\big)^2
  +\beta\sum_{j=1}^{N_b}\eta_j\,\big(Bu_M(y_j)-g(y_j)\big)^2,\\
  & u_M=\sum_{m=1}^M c_m\varphi_m .
  \end{aligned}
\end{equation}
Here $\beta>0$ is the total boundary weight relative to the unit-normalized interior block. Uniform and
greedy rules use constant unnormalized interior weights, whereas the randomized rule uses
inverse-Christoffel weights. The normalization from $\omega_i$ to $\bar\omega_i$ does not change the
condition number of the interior block because it is a uniform scalar rescaling, but it fixes the
interior scale relative to the boundary block.

Define
\begin{equation}\label{eq:assembled-blocks}
  \begin{array}{ll}
  A_i(i,m)=L\varphi_m(\bx_i),\quad A_i\in\mathbb R^{N_i\times M}, &
  A_b(j,m)=B\varphi_m(y_j),\quad A_b\in\mathbb R^{N_b\times M}, \\[2mm]
  \bar\Omega_i=\mathrm{diag}(\bar\omega_1,\ldots,\bar\omega_{N_i}), &
  \Omega_b=\mathrm{diag}(\eta_1,\ldots,\eta_{N_b}).
  \end{array}
\end{equation}
Here $\operatorname{diag}$ forms a diagonal matrix from the listed entries. Define the sampled data
vectors $f_i:=(f(\bx_1),\ldots,f(\bx_{N_i}))^\top$ and
$g_b:=(g(y_1),\ldots,g(y_{N_b}))^\top$. The corresponding base weighted least-squares problem is
\begin{equation}\label{eq:assembled-ls-system}
  \min_{c\in\mathbb R^M}\|\mathcal A c-b\|_2^2,\qquad
  \mathcal A:=\begin{bmatrix}
  \bar\Omega_i^{1/2}A_i\\
  \sqrt{\beta}\,\Omega_b^{1/2}A_b
  \end{bmatrix},\qquad
  b:=\begin{bmatrix}
  \bar\Omega_i^{1/2}f_i\\
  \sqrt{\beta}\,\Omega_b^{1/2}g_b
  \end{bmatrix},
\end{equation}
Here $\mathcal A\in\mathbb R^{(N_i+N_b)\times M}$ and $b\in\mathbb R^{N_i+N_b}$.

Whitening or preconditioning introduces a full-column-rank coefficient map
$P_{\rm coef}\in\mathbb R^{M\times m_{\rm tr}}$, where $m_{\rm tr}$ is the dimension of the transformed coefficient
space, and solves for the transformed coefficient vector $\widetilde c\in\mathbb R^{m_{\rm tr}}$:
\begin{equation}\label{eq:transformed-ls-system}
  \min_{\widetilde c\in\mathbb R^{m_{\rm tr}}}\|\mathcal A P_{\rm coef}\widetilde c-b\|_2^2,
  \qquad c=P_{\rm coef}\widetilde c.
\end{equation}
For retained residual whitening, $P_{\rm coef}=T_r$ and $m_{\rm tr}=r$. Because $T_r$ is rectangular when $r<M$, this
choice restricts the coefficient vector to $\operatorname{range}(T_r)$ as well as whitening that
subspace. An untransformed solve in the full coefficient space based on a singular value decomposition (SVD) uses
$P_{\rm coef}=I_M$, where $I_M$ is the $M\times M$ identity matrix; an invertible full-rank
$P_{\rm coef}$ changes only
the coefficient coordinates. Full-rank ridge and boundary-aware transforms are identified explicitly
where they are used.
Reported condition numbers and LSQR iterations refer to the matrix $\mathcal A P_{\rm coef}$ actually
passed to the solver. Experiments that isolate point selection hold $P_{\rm coef}$ fixed, whereas combined stabilization
results are labeled as sampling plus whitening. Within each collocation-sampling comparison, the boundary points,
$\eta_j$, and $\beta$ are shared.

\begin{remark}[Interior and full-system conditioning]\label{rem:interior-full}
The default sampling rule and the concentration result below use the interior residual Gram $G$. For the
condition number of the full interior-plus-boundary matrix, whitening can instead be based on the
coefficient-space Gram induced by a weighted augmented reference measure. For a whitening
parameter $\vartheta\ge0$, define
\begin{equation}\label{eq:boundary-aware-gram}
  \begin{aligned}
  \chi(y)&=\big(B\varphi_1(y),\ldots,B\varphi_M(y)\big)^\top,\\
  G_b&=\E_{y\sim\nu}[\chi(y)\chi(y)^\top],\\
  G_{\vartheta}^{\rm aug}&=G+\vartheta G_b.
  \end{aligned}
\end{equation}
Both terms in \eqref{eq:boundary-aware-gram} are quadratic forms in the same coefficient vector. If the
augmented reference Gram is chosen to match the population counterpart of
\eqref{eq:weighted-ls-objective} under the same normalized reference measures, then $\vartheta=\beta$.
The ablation also uses the proportional normalized
mixture
\[
  \alpha:=\frac{\vartheta}{1+\vartheta},\qquad
  G_\alpha:=(1-\alpha)G+\alpha G_b
  =\frac{1}{1+\vartheta}G_\vartheta^{\rm aug},
\]
whose global factor does not affect a condition number. A whitening transform constructed from $G$
targets the interior quadratic form only. A transform constructed from $G_\vartheta^{\rm aug}$ targets the
combined reference form, but
conditioning of a finite assembled matrix additionally depends on how accurately the sampled interior
and boundary blocks approximate their reference Grams. The concentration theorem below treats the
default retained interior block.
\end{remark}

Thus the reference Gram and sampling weights determine the interior discretization, whereas $\beta$
and $P_{\rm coef}$ determine boundary scaling and solver coordinates, respectively.

\section{Residual-space sampling strategies}\label{sec:algorithms}
The objective is to approximate the weighted residual Gram on the retained eigencoordinates.
Randomized
residual-Christoffel sampling uses the fixed reference Gram $G_r^\dagger$, whereas deterministic
leverage-greedy selection updates the Gram of the rows already selected. Boundary rows are appended
afterward as a fixed block. Throughout this section, $N=N_i$ denotes the number of selected interior
points or point blocks. A point contributes one residual row in the scalar formulation and may
contribute several rows for a system. The notation $K_M$ abbreviates $K_M^L$ when the operator is clear.

\subsection{Randomized residual-Christoffel sampling}
The randomized rule aims to select a small set of weighted interior residual rows whose empirical
whitened Gram \eqref{eq:empirical-whitened-gram} approximates $I_r$ in the retained eigencoordinates.
The population sampling rule draws interior points from the residual-Christoffel measure
\begin{equation}\label{eq:population-christoffel-design}
  dp=(K_M/r)\,d\mu,\qquad
  \omega_i=\frac{r}{N K_M(\bx_i)} .
\end{equation}
Here $p$ is the probability measure obtained by tilting $\mu$ with the residual-Christoffel density
$K_M/r$ on the support where $K_M>0$; it is normalized because $\E_\mu K_M=r$. Conditional on the
fixed features, Gram, and retained rank, the points $\bx_1,\ldots,\bx_N$ are drawn independently with
replacement from $p$. Each sampled row contributes an unbiased $1/N$ share of the identity in the
retained eigencoordinates:
\begin{equation}\label{eq:population-unbiased-row}
  \E_p\!\left[\omega_i\,\psi^\perp(\bx_i)\psi^\perp(\bx_i)^\top\right]
  =\frac1N I_r .
\end{equation}
Moreover, every weighted whitened row contribution has the same trace,
\begin{equation}\label{eq:population-row-norm}
  \omega_i\|\psi^\perp(\bx_i)\|_2^2=\frac{r}{N}.
\end{equation}
Equation~\eqref{eq:population-unbiased-row} gives the mean interior Gram, while
\eqref{eq:population-row-norm} bounds every PSD rank-one summand by $r/N$ in spectral norm.
Before boundary assembly, they are rescaled to $\bar\omega_i$ as defined in Section~\ref{sec:form};
this common scalar rescaling leaves the interior condition number unchanged.

For the discrete sampler, a reference quadrature is first used to approximate the residual Gram and
construct the whitening map. The leverage scores used for sampling are then evaluated on a finite
candidate set $\mathcal C_P=\{(z_q,\varpi_q)\}_{q=1}^{P}$ with $\sum_q\varpi_q=1$, where
$\varpi_q$ is the probability mass assigned to candidate $z_q$. Unless otherwise stated,
this candidate set is drawn from the same normalized Lebesgue/reference measure $\mu$; for a uniform
finite set, $\varpi_q=1/P$. Let $K_q=K_M(z_q)$. The discrete sampling probability of candidate index
$q$ is
\begin{equation}\label{eq:finite-pool-probability}
  S_P=\sum_s \varpi_s K_s,
  \qquad \pi_q=\frac{\varpi_q K_q}{S_P},
\end{equation}
where we require $S_P>0$. The index $q_i$ is drawn independently from the discrete distribution in
\eqref{eq:finite-pool-probability}, and the sampled row is assigned the sampling weight
\begin{equation}\label{eq:finite-pool-weight}
  \omega(q)=
  \frac{S_P}{N K_q} .
\end{equation}
Candidates with $K_q=0$ have zero sampling probability in \eqref{eq:finite-pool-probability} and are
omitted. Conditional on the candidate set and whitening map, the finite-set estimator is unbiased
for the candidate-measure Gram
\begin{equation}\label{eq:finite-pool-unbiased-gram}
  \E\!\left[
  \sum_{i=1}^N \omega(q_i)\,
  \psi^\perp(z_{q_i})\psi^\perp(z_{q_i})^\top
  \mathrel{\Big|}\mathcal C_P,T_r\right]
  =H_P:=\sum_{q=1}^P \varpi_q\,
  \psi^\perp(z_q)\psi^\perp(z_q)^\top .
\end{equation}
Thus \eqref{eq:finite-pool-unbiased-gram} is the finite-set counterpart of
\eqref{eq:population-unbiased-row}, but its target is $H_P$, not necessarily $I_r$. When the candidate
measure approximates the reference residual Gram, $H_P\approx I_r$ and
$S_P=\operatorname{tr}(H_P)\approx r$, where $\operatorname{tr}$ denotes the matrix trace. Hence
\eqref{eq:finite-pool-weight} approaches the population normalization
\[
  \omega(q)\approx \frac{r}{N K_M(z_q)} .
\]
This finite-set rule is used in the experiments. Sampling is with replacement, so a selected point
may appear more than once and is retained as a repeated weighted equation. Christoffel and greedy
sampling rules use the same finite candidate set when they are compared directly. Uniform and residual-adaptive points
are drawn independently from the reference distribution. The
candidate mismatch $\|H_P-I_r\|_2$ is isolated in Remark~\ref{rem:finite-candidate}. Unlike greedy
selection, randomized sampling allows arbitrary $N$ through sampling with replacement.

For experiments that avoid explicit rank truncation, we use $\gamma>0$ and the ridge score
\begin{equation}\label{eq:ridge-christoffel-score}
  K_{M,\gamma}(\bx)=\psi(\bx)^\top(G+\gamma I_M)^{-1}\psi(\bx).
\end{equation}
For the population measure, the normalizing constant is
\[
  d_{\mathrm{eff}}^G(\gamma)=\mathrm{tr}\big(G(G+\gamma I_M)^{-1}\big).
\]
This quantity is the ridge effective dimension of the finite Gram $G$ at ridge parameter $\gamma$.
For a finite candidate set, it is
\[
  S_{P,\gamma}=\sum_q\varpi_qK_{M,\gamma}(z_q).
\]
Equations
\eqref{eq:finite-pool-probability}--\eqref{eq:finite-pool-weight} then apply with $K_M$ and $S_P$
replaced by $K_{M,\gamma}$ and $S_{P,\gamma}$.

The constructions in this subsection sample only the interior residual rows; boundary rows are
appended after converting $\omega_i$ to the normalized assembly weights $\bar\omega_i$ in
\eqref{eq:weighted-ls-objective}.
Algorithm~\ref{alg:christoffel} summarizes the resulting randomized finite-candidate construction.

\input{algorithms}
\begin{algorithm}
\footnotesize
\algorithmruletitle{Randomized residual-Christoffel sampling}{alg:christoffel}
\begin{algorithmlines}
\item Given feature functions $\{\varphi_m\}_{m=1}^M$, operator $L$, reference quadrature
      $\mathcal Q=\{(\tilde{\bx}_q,w_q)\}_{q=1}^{Q}$, candidate set with probability masses
      $\mathcal C_P=\{(z_q,\varpi_q)\}_{q=1}^{P}$, interior sample size $N$, and optional ridge
      $\gamma\ge0$, with $\sum_qw_q=\sum_q\varpi_q=1$.
\item Evaluate the residual feature row
      $\psi(\bx)=(L\varphi_1(\bx),\ldots,L\varphi_M(\bx))^\top$
      on $\mathcal Q$ and $\mathcal C_P$.
\item Form the reference residual Gram
      \[
        G_Q=\sum_{q=1}^{Q}w_q\psi(\tilde{\bx}_q)\psi(\tilde{\bx}_q)^\top .
      \]
\item If $\gamma=0$, compute the eigendecomposition of $G_Q$ and retain the numerical rank $r$. Let
      $V_r\in\mathbb R^{M\times r}$ contain the retained eigenvectors and
      $\Lambda_r\in\mathbb R^{r\times r}$ the corresponding positive eigenvalues. Set
      $T_r=V_r\Lambda_r^{-1/2}$ and
      $G_{Q,r}^{\dagger}=V_r\Lambda_r^{-1}V_r^\top$. For each candidate point, set
      \[
        K_q=\psi(z_q)^\top G_{Q,r}^{\dagger}\psi(z_q)
            =\|T_r^\top\psi(z_q)\|_2^2 .
      \]
\item If $\gamma>0$, set $T_\gamma=(G_Q+\gamma I_M)^{-1/2}$ and
      \[
        K_q=\psi(z_q)^\top(G_Q+\gamma I_M)^{-1}\psi(z_q)
            =\|T_\gamma^\top\psi(z_q)\|_2^2 .
      \]
\item For a system with residual block $\Psi(z_q)$, form
      $G_Q=\sum_qw_q\Psi(\tilde\bx_q)^\top\Psi(\tilde\bx_q)$ and replace the scalar score by the sum
      of the equation-row scores, equivalently $K_q=\|\Psi(z_q)T_r\|_F^2$ in the truncated
      construction. A selected point assigns the same sampling weight to every row in its block.
\item Using these scores, define the finite-set probabilities and inverse-Christoffel sampling weights
      by \eqref{eq:finite-pool-probability}--\eqref{eq:finite-pool-weight}. Candidates with $K_q=0$
      receive zero probability; require $S_P=\sum_q\varpi_qK_q>0$.
\item Draw indices $q_1,\ldots,q_N$ independently from $\{\pi_q\}_{q=1}^{P}$ and set
      $\bx_i=z_{q_i}$ with sampling weight $\omega_i=\omega(q_i)$. Sampling is with replacement, so
      repeated indices are retained as repeated weighted rows.
\item Return sampled interior points $\{\bx_i\}_{i=1}^{N}$ and sampling weights
      $\{\omega_i\}_{i=1}^{N}$ for the interior block, together with $T_r$ when $\gamma=0$ or
      $T_\gamma$ when $\gamma>0$. Before boundary assembly, set
      $\bar\omega_i=\omega_i/\sum_s\omega_s$ as in \eqref{eq:weighted-ls-objective}.
\end{algorithmlines}
\noindent\rule{\textwidth}{0.35pt}
\end{algorithm}

\subsection{Deterministic leverage-greedy selection}
The deterministic rule uses leverage relative to the rows already selected, rather than the fixed
reference score $K_M$. This variant is useful when the candidate residual rows have already been
evaluated and a nonrandom set is desired. Consider an unweighted candidate set
$\mathcal C_P=\{z_q\}_{q=1}^{P}$, choose $N\le P$, and let $\lambda_{\rm g}>0$. Weighted candidate
measures can be incorporated into the corresponding weighted log-determinant design by replacing
$\psi(z_q)$ with $\sqrt{\varpi_q}\psi(z_q)$. Starting from
$G_0=\lambda_{\rm g} I_M$, define the remaining set
$\mathcal C_P^{(k)}=\mathcal C_P\setminus\{\bx_1,\ldots,\bx_k\}$ and choose
\begin{equation}\label{eq:greedy-selection-rule}
  \bx_{k+1}\in\arg\max_{\bx\in\mathcal C_P^{(k)}}
  \psi(\bx)^\top G_k^{-1}\psi(\bx),\qquad
  G_k=\lambda_{\rm g} I_M+\sum_{j=1}^{k}\psi(\bx_j)\psi(\bx_j)^\top .
\end{equation}
Here $G_k$ is the regularized Gram of the residual rows already selected, and
$\psi(\bx)^\top G_k^{-1}\psi(\bx)$ is the leverage of a candidate row in the metric induced by that
selected set. A large score identifies a direction that is weakly represented by the current rows.
For a system of equations, the implementation uses the sum of the equation-row scores as a trace
surrogate and incorporates a selected point by successive rank-one updates. If the rows at a point
form $A(\bx)\in\mathbb R^{s\times M_{\rm sys}}$, the exact block log-determinant increment is
$\log\det(I_s+A(\bx)G_k^{-1}A(\bx)^\top)$; it is not generally equal to the summed row scores. The parameter
$\lambda_{\rm g}$ ensures $G_k\succ0$ from the first step and controls the scale assigned to directions
not yet represented. The selection rule
\eqref{eq:greedy-selection-rule} has a local log-determinant characterization.

\begin{proposition}[Scalar-row regularized greedy log-determinant step]\label{prop:greedy}
Let $\mathcal C_P$ be a candidate set of scalar residual rows and let
$G_k=\lambda_{\rm g}I_M+\sum_{j=1}^{k}\psi(\bx_j)\psi(\bx_j)^\top$ with $\lambda_{\rm g}>0$. Then
\[
  \bx_{k+1}\in\arg\max_{\bx\in\mathcal C_P^{(k)}}
  \psi(\bx)^\top G_k^{-1}\psi(\bx)
\]
maximizes $\log\det G_{k+1}-\log\det G_k$ over the remaining candidates
$\mathcal C_P^{(k)}$.
\end{proposition}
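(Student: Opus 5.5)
The argument is a direct application of the matrix determinant lemma to the rank-one update that defines $G_{k+1}$, followed by a monotonicity observation.

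First, note that $G_k\succeq\lambda_{\rm g}I_M\succ0$ for every $k$, because it is a sum of $\lambda_{\rm g}I_M$ and PSD rank-one terms. Hence $G_k^{-1}$ exists and $\log\det G_k$ is finite. For a candidate $\bx\in\mathcal C_P^{(k)}$, write $G_{k+1}(\bx)=G_k+\psi(\bx)\psi(\bx)^\top$ for the matrix that would result from selecting $\bx$. Factor this as
\[
  G_{k+1}(\bx)=G_k^{1/2}\big(I_M+G_k^{-1/2}\psi(\bx)\psi(\bx)^\top G_k^{-1/2}\big)G_k^{1/2}.
\]
Set $u=G_k^{-1/2}\psi(\bx)$. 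The identity $\det(I_M+uu^\top)=1+u^\top u$ holds because $I_M+uu^\top$ has eigenvalue $1+\|u\|_2^2$ on $\mathrm{span}\{u\}$ and eigenvalue $1$ on its orthogonal complement. Combining the two gives
\[
  \log\det G_{k+1}(\bx)-\log\det G_k=\log\big(1+\psi(\bx)^\top G_k^{-1}\psi(\bx)\big).
\]

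Next, the function $t\mapsto\log(1+t)$ is strictly increasing on $[0,\infty)$. The leverage $\psi(\bx)^\top G_k^{-1}\psi(\bx)$ is nonnegative because $G_k^{-1}\succ0$. Since $G_k$ does not depend on the candidate, maximizing the log-determinant increment over the finite set $\mathcal C_P^{(k)}$ is equivalent to maximizing the leverage. The two problems therefore have the same set of maximizers, and any $\bx_{k+1}$ chosen by \eqref{eq:greedy-selection-rule} maximizes $\log\det G_{k+1}-\log\det G_k$. If $\mathcal C_P^{(k)}$ is nonempty, which holds whenever $k<N\le P$, the maximum exists because the set is finite.

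There is no substantive obstacle here. The only points needing care are that $G_k$ is invertible, which $\lambda_{\rm g}>0$ guarantees, and that the claim concerns scalar rows. For a point contributing a block of $s$ rows, the increment is instead $\log\det(I_s+A(\bx)G_k^{-1}A(\bx)^\top)$, as the paper already remarks. That quantity is not a monotone function of the summed row scores, so the argument above does not extend to the block case.
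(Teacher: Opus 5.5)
Your proof is correct and follows the paper's argument: the matrix determinant lemma reduces the increment to $\log(1+\psi(\bx)^\top G_k^{-1}\psi(\bx))$, and the monotonicity of $t\mapsto\log(1+t)$ on $[0,\infty)$ then identifies the two sets of maximizers. The only difference is that you verify the lemma directly through the $G_k^{1/2}$ factorization rather than citing it, and your remarks on the invertibility of $G_k$ and on why the argument fails for point blocks are accurate.
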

\begin{proof}
For any candidate $\bx$, $G_{k+1}=G_k+\psi(\bx)\psi(\bx)^\top$. The matrix determinant lemma gives
\begin{equation}\label{eq:greedy-determinant-lemma}
  \det G_{k+1}
  =\det G_k\big(1+\psi(\bx)^\top G_k^{-1}\psi(\bx)\big).
\end{equation}
Since the leverage score is nonnegative and $t\mapsto\log(1+t)$ is increasing for $t\ge0$,
\eqref{eq:greedy-determinant-lemma} proves the one-step statement. This gives a local regularized
log-determinant characterization. The Sherman--Morrison identity gives the
corresponding $O(M^2)$ inverse update.
\end{proof}

The selected greedy rows enter the common least-squares assembly with unit sampling weights before
the same boundary and normalization convention is applied. Its accuracy as a function of sample size is
evaluated numerically. For scalar equations, each accepted row gives the largest available
local increase in the regularized log determinant. For systems, the summed row-leverage score is the
trace surrogate described above. Algorithm~\ref{alg:greedy} summarizes this deterministic
finite-candidate construction, and Figure~\ref{fig:designpoints} visualizes the interior collocation sets
produced by the three row-selection rules.

\begin{algorithm}
\footnotesize
\algorithmruletitle{Deterministic leverage-greedy sampling}{alg:greedy}
\begin{algorithmlines}
\item Given residual rows $\psi(z_q)$ on a candidate set
      $\mathcal C_P=\{z_q\}_{q=1}^{P}$, target sample size $N\le P$, and greedy regularization
      $\lambda_{\rm g}>0$.
\item Initialize
      \[
        G_0=\lambda_{\rm g}I_M,\qquad H_0=G_0^{-1}.
      \]
      At each subsequent step, $H_k$ denotes the maintained inverse $G_k^{-1}$.
\item \textbf{for} $k=0,\ldots,N-1$ \textbf{do}
\item \hspace*{1em}Compute the candidate scores
      \[
        \ell_k(q)=\psi(z_q)^\top H_k\psi(z_q),
        \qquad q\notin\{q_1,\ldots,q_k\} .
      \]
\item \hspace*{1em}Choose the candidate with maximal score as the next collocation point,
      \[
        q_{k+1}\in\arg\max_{q\notin\{q_1,\ldots,q_k\}}\ell_k(q),\qquad
        \bx_{k+1}:=z_{q_{k+1}} .
      \]
\item \hspace*{1em}Update the Gram and inverse. With $a=\psi(\bx_{k+1})$, $v=H_ka$, and
      $d_k=1+a^\top v$,
      \[
        G_{k+1}=G_k+aa^\top,\qquad
        H_{k+1}=H_k-\frac{vv^\top}{d_k}.
      \]
\item \textbf{end for}
\item For a vector-valued operator, use the sum of the equation-row scores as a trace surrogate. After
      selecting a point, apply one rank-one update for each equation row. This summed score is not, in
      general, the exact block log-determinant increment.
\item Return deterministic interior points $\{\bx_i\}_{i=1}^{N}$ with unit sampling weights
      $\omega_i=1$. Thus $\bar\omega_i=1/N$ in \eqref{eq:weighted-ls-objective}.
\end{algorithmlines}
\noindent\rule{\textwidth}{0.35pt}
\end{algorithm}

\begin{figure}[!t]\centering
\includegraphics[width=.98\textwidth]{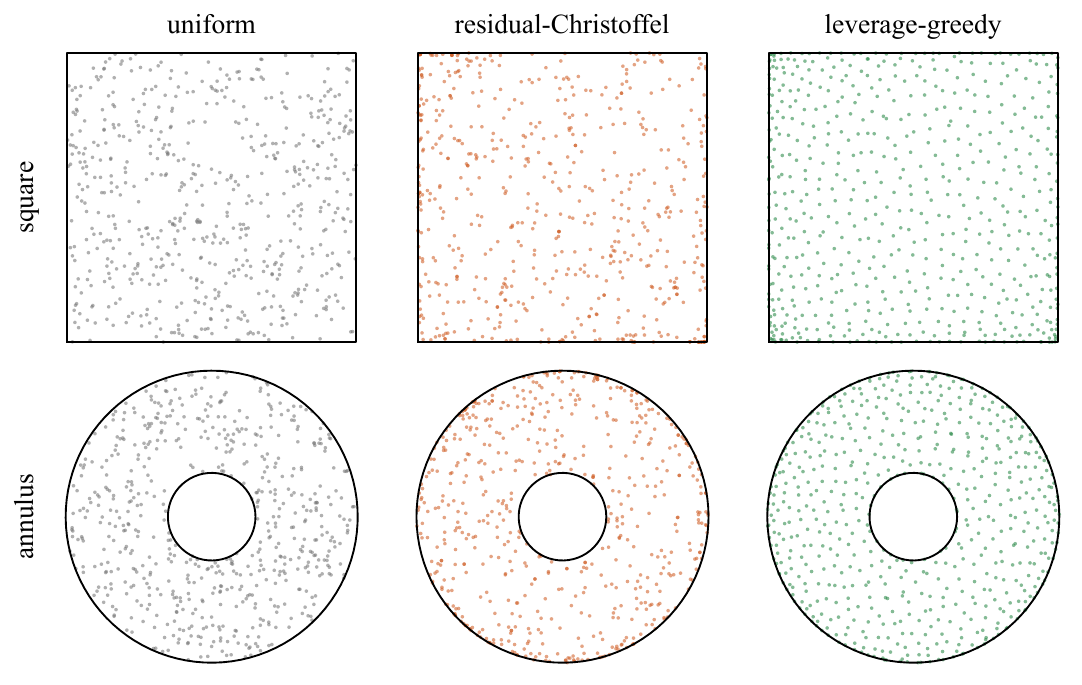}
\caption{Interior collocation sets from the three point-selection rules. Rows show high-frequency Poisson on
the square ($M=500$, $r=252$, $N=600$) and smooth Poisson on the annulus ($M=300$, $r=227$,
$N=600$); columns show uniform, residual-Christoffel, and leverage-greedy sampling. In these examples,
residual-Christoffel sampling visibly reallocates points toward boundary-adjacent high-leverage regions
and may repeat candidates, whereas the without-replacement greedy rule produces a more evenly
separated deterministic set.}
\label{fig:designpoints}
\end{figure}

\subsection{Computational regimes and cost}\label{sec:design-cost}
Let $Q$ be the number of reference quadrature points, $P$ the candidate-pool size, and $r$ the retained
residual dimension. On the reference quadrature, residual-row evaluation costs $O(QM c_L)$, where
$c_L$ is the per-feature cost of evaluating the action of $L$. Forming
$G_Q=\Psi_Q^\top W\Psi_Q$ costs $O(QM^2)$, where $\Psi_Q\in\mathbb R^{Q\times M}$ has row
$q$ equal to $\psi(\tilde{\bx}_q)^\top$ and $W=\operatorname{diag}(w_1,\ldots,w_Q)$,
and diagonalizing this dense $M\times M$ Gram costs $O(M^3)$ with $O(M^2)$ Gram storage.

Candidate-row evaluation costs $O(PM c_L)$. Once the retained eigenspace is available, rank-$r$
Christoffel scores cost $O(PMr)$. Probability normalization costs $O(P)$; an alias table gives
$O(P+N)$ total preprocessing and sampling work. Approximate ridge scores have the same projection cost
when the inverse is represented by a rank-$r$ or sketched factor. An exact full-dimensional ridge
factor costs $O(PM^2)$ to apply over the candidate set.

For the deterministic greedy rule, recomputing the inverse and all scores at every step costs
$O(N(M^3+PM^2))$. With the $P\times M$ candidate-row matrix stored, Sherman--Morrison updates cost
$O(PM+M^2)$ per step, giving $O(N(PM+M^2))$ work and $O(PM+M^2)$ storage after candidate evaluation.
For larger feature counts, a randomized range finder can be applied to the weighted residual matrix
$R_Q=W^{1/2}\Psi_Q$. With sketch size $s_{\rm sk}$, each pass costs $O(QMs_{\rm sk})$; a streamed or matrix-free
implementation uses $O((Q+M)s_{\rm sk})$ auxiliary memory in addition to the data needed to evaluate
residual rows, whereas explicitly storing $R_Q$ requires $O(QM)$ memory. These estimates exclude
boundary assembly and the final least-squares solve.

\section{Conditioning theory}\label{sec:analysis}
Write $K_M:=K_M^L$. Conditional on the fixed features, reference Gram, whitening map, and rank $r$,
inverse-Christoffel sampling makes the whitened interior Gram concentrate about $I_r$. For a finite
candidate set, the target is $H_P$, which equals $I_r$ only when the candidate measure also defines the
whitening Gram. These statements quantify interior conditioning. The
finite-dimensional probability is over the sampled points or point blocks; the operator-limit probability below is over
an independent random-feature realization.

\subsection{Residual-Gram concentration and interior conditioning}

\begin{lemma}[Unbiased residual Gram]\label{lem:unb}
In the scalar formulation, let
\[
  \widehat G^\perp=\sum_{i=1}^N
  \omega_i\psi^\perp(\bx_i)\psi^\perp(\bx_i)^\top ,
\]
and, in the system formulation, replace each summand by
\[
  \omega_iT_r^\top\Psi(\bx_i)^\top\Psi(\bx_i)T_r.
\]
Then
$\E[\widehat G^\perp]=I_r$ in either case.
\end{lemma}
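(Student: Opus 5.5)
The proof is a direct computation of the expectation under the population design \eqref{eq:population-christoffel-design}. Conditional on the fixed features, the Gram $G$, the rank $r$ and the map $T_r$, the points $\bx_1,\ldots,\bx_N$ are i.i.d.\ from $p$. By linearity of expectation, $\E[\widehat G^\perp]$ is $N$ times the expectation of a single summand. It therefore suffices to show that one summand has mean $\tfrac1N I_r$, which is exactly \eqref{eq:population-unbiased-row}. I would verify that identity rather than simply cite it.

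For the scalar case, fix one index $i$ and write the expectation against $p$ as an integral against $\mu$ with density $K_M/r$ on the set $S=\{K_M>0\}$:
\[
\E_p\big[\omega_i\psi^\perp\psi^{\perp\top}\big]=\int_S \frac{r}{N K_M(\bx)}\,\psi^\perp(\bx)\psi^\perp(\bx)^\top\,\frac{K_M(\bx)}{r}\,d\mu(\bx)=\frac1N\int_S\psi^\perp\psi^{\perp\top}\,d\mu .
\]
The density and the weight cancel. The only point needing care is the complement of $S$, where the weight is undefined and $p$ puts no mass. On that set $K_M=\|\psi^\perp\|_2^2=0$, so $\psi^\perp$ vanishes identically there. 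Hence the integral over $S$ equals the integral over all of $\Omega$, which is $\E_\mu[\psi^\perp\psi^{\perp\top}]=T_r^\top G T_r=I_r$ by \eqref{eq:truncated-whitening}.

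Integrability also needs a remark. Each summand is dominated in norm by its trace, and the trace equals the constant $r/N$ by \eqref{eq:population-row-norm}. So the expectation is finite and Fubini/linearity apply without further assumptions.

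The system case follows the same steps. The score is $K_{M,\mathrm{sys}}^L=\|\Psi T_r\|_F^2$, with $\E_\mu K_{M,\mathrm{sys}}^L=r$, so $p$ is again a probability measure and the weight $r/(NK_{M,\mathrm{sys}}^L)$ cancels the density. Where the score vanishes, $\Psi T_r=0$, so the summand $T_r^\top\Psi^\top\Psi T_r$ is zero there. The remaining integral is $\E_\mu[T_r^\top\Psi^\top\Psi T_r]=T_r^\top G T_r=I_r$, with $G$ now the block Gram.

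I expect the only real obstacle to be the zero-score set: making precise that excluding it loses nothing. The observation that a vanishing Christoffel function forces the whitened row to vanish is what settles it. The lemma concerns the population design; the finite-candidate analogue with target $H_P$ is \eqref{eq:finite-pool-unbiased-gram}, obtained by the same computation with $\mu$ replaced by the candidate measure.
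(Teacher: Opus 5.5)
Your proposal is correct and follows essentially the same argument as the paper's proof. Both use linearity over the identically distributed draws, cancel the weight $r/(NK_M)$ against the density $K_M/r$, note that the rank-one integrand vanishes where $K_M=\|\psi^\perp\|_2^2=0$, and conclude from $T_r^\top G T_r=I_r$ in both the scalar and block cases; your integrability remark via the constant trace $r/N$ is a harmless detail that the paper leaves implicit.
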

\begin{proof}
The samples are independent and identically distributed from $dp=(K_M/r)\,d\mu$, with
$\omega(\bx)=r/(NK_M(\bx))$. Hence
\[
\begin{aligned}
\E[\widehat G^\perp]
&=N\,\E_{\bx\sim p}\!\big[\omega(\bx)\psi^\perp(\bx)\psi^\perp(\bx)^\top\big]\\
&=N\!\int\!\frac{r}{N K_M}\psi^\perp\psi^{\perp\top}\frac{K_M}{r}\,d\mu\\
&=\int\psi^\perp\psi^{\perp\top}d\mu=I_r,
\end{aligned}
\]
where the last equality follows from $\E_\mu[\psi^\perp\psi^{\perp\top}]=I_r$ on the retained
subspace. The identities are understood on the support $K_M>0$; when $K_M=0$, the rank-one
integrand also vanishes.
For a system, the same cancellation leaves
$T_r^\top\E_\mu[\Psi^\top\Psi]T_r=T_r^\top GT_r=I_r$.
\end{proof}

\begin{remark}[Finite-candidate implementation]\label{rem:finite-candidate}
Let $\mathcal C_P=\{(z_q,\varpi_q)\}_{q=1}^{P}$ be a candidate set and set
\[
  S_P=\sum_{q=1}^P\varpi_q K_M(z_q),\qquad
  \pi_q=\frac{\varpi_qK_M(z_q)}{S_P},\qquad
  \omega(q)=\frac{S_P}{N K_M(z_q)}
\]
on the support where $K_M(z_q)>0$, assuming $S_P>0$. Conditional on $\mathcal C_P$ and the fixed
whitening map $T_r$, the expectation is
\[
  \E[\widehat G^\perp\mid\mathcal C_P]
  =H_P:=\sum_{q=1}^P\varpi_q\psi^\perp(z_q)\psi^\perp(z_q)^\top .
\]
If the same empirical candidate measure is also used for whitening, then $H_P=I_r$ and
$S_P=\mathrm{tr}(H_P)=r$. If whitening is computed from a separate reference quadrature, define the
deterministic mismatch, conditional on the reference quadrature and candidate set, by
\[
  \varepsilon_P^{\rm cand}:=\|H_P-I_r\|_2.
\]
The sampling and quadrature errors then separate as
\begin{equation}\label{eq:candidate-mismatch-bound}
  \|\widehat G^\perp-I_r\|_2
  \le \|\widehat G^\perp-H_P\|_2+\varepsilon_P^{\rm cand}.
\end{equation}
Thus $H_P$ is the conditional candidate target; concentration about $I_r$ additionally requires
control of $\varepsilon_P^{\rm cand}$. Theorem~\ref{thm:sc}, with the stated constant, applies to the population
construction. The same proof applies conditionally to a finite candidate set used for whitening because then
$H_P=I_r$, $S_P=r$, and the same $r/N$ summand bound holds. For a distinct reference quadrature, a bound
about $H_P$ depends additionally on the spectrum of $H_P$ and is not asserted here.
\end{remark}

\begin{theorem}[Population residual-Gram concentration]\label{thm:sc}
Condition on the realized features, population Gram, whitening map $T_r$, and retained rank $r$.
Let the interior points or point blocks be sampled independently according to Lemma~\ref{lem:unb}
and the system extension above. Fix a relative Gram tolerance $\varepsilon\in(0,1)$ and failure
probability $\delta\in(0,1)$. If
\begin{equation}\label{eq:sample-complexity-bound}
  N\ge \frac{3}{\varepsilon^2}\, r\,\ln(2r/\delta),
\end{equation}
then with probability $\ge 1-\delta$,
\begin{equation}\label{eq:gram-concentration-bound}
  \norm{\widehat G^\perp-I_r}_2\le\varepsilon ,
\end{equation}
or, equivalently,
$(1-\varepsilon)I_r\preceq\widehat G^\perp\preceq(1+\varepsilon)I_r$.
\end{theorem}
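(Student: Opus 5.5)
This is a direct application of the matrix Chernoff inequality (Tropp) to a sum of independent PSD random matrices. Write $\widehat G^\perp=\sum_{i=1}^N X_i$ with
\[
  X_i=\omega_i\psi^\perp(\bx_i)\psi^\perp(\bx_i)^\top
\]
in the scalar case. In the system case,
\[
  X_i=\omega_iT_r^\top\Psi(\bx_i)^\top\Psi(\bx_i)T_r .
\]
Conditional on the fixed features, $T_r$ and $r$, the $X_i$ are i.i.d.\ and PSD. Lemma~\ref{lem:unb} gives $\sum_i\E X_i=I_r$, so $\mu_{\min}=\mu_{\max}=1$ in Chernoff notation.

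The uniform bound on the summands comes from \eqref{eq:population-row-norm}. In the scalar case $X_i$ is rank one with trace $\omega_iK_M(\bx_i)=r/N$, so $\|X_i\|_2=r/N$ exactly. In the system case $X_i$ is PSD with trace $\omega_iK_{M,\mathrm{sys}}^L(\bx_i)=r/N$, and the spectral norm of a PSD matrix is at most its trace. Hence $\|X_i\|_2\le R:=r/N$ almost surely on the support $K_M>0$, which carries full $p$-mass.

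Apply the matrix Chernoff bounds in their simplified form, valid for $\varepsilon\in(0,1)$:
\[
  \Pr\{\lambda_{\min}(\widehat G^\perp)\le 1-\varepsilon\}\le r\,e^{-\varepsilon^2/(2R)},
\]
\[
  \Pr\{\lambda_{\max}(\widehat G^\perp)\ge 1+\varepsilon\}\le r\,e^{-\varepsilon^2/(3R)}.
\]
The first follows from $(1-\varepsilon)\ln(1-\varepsilon)\ge-\varepsilon+\varepsilon^2/2$. The second follows from $(1+\varepsilon)\ln(1+\varepsilon)-\varepsilon\ge\varepsilon^2/3$ on $[0,1]$. A union bound then gives failure probability at most $2r\exp\!\big(-\varepsilon^2N/(3r)\big)$. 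Requiring this to be at most $\delta$ is exactly \eqref{eq:sample-complexity-bound}. On the complementary event all eigenvalues of the symmetric matrix $\widehat G^\perp$ lie in $[1-\varepsilon,1+\varepsilon]$. This is \eqref{eq:gram-concentration-bound}, and the Loewner sandwich is equivalent to it.

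The only delicate step is getting the constant $3$ in the exponent for both tails. This requires the scalar inequalities on $(1\pm\varepsilon)\ln(1\pm\varepsilon)$ above, which are checked by comparing derivatives at $0$, or simply quoted from Tropp's user-friendly tail bounds. One must also note that the zero-probability set $\{K_M=0\}$ does not affect the weights or the bound.
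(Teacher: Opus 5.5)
Your proposal is correct and follows the paper's proof essentially line for line. You use the same PSD summand decomposition, the uniform bound $\|X_i\|_2\le r/N$ (via the trace for point blocks), the simplified matrix Chernoff tails with exponents $\varepsilon^2 N/(2r)$ and $\varepsilon^2 N/(3r)$, and a union bound that gives exactly \eqref{eq:sample-complexity-bound}. Your explicit check of the scalar inequalities behind the simplified tails is a detail the paper simply cites from Tropp.
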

\begin{proof}
Write $\widehat G^\perp=\sum_{i=1}^N X_i$. In the scalar formulation, the independent summands are
\[
  X_i=\omega_i\psi^\perp(\bx_i)\psi^\perp(\bx_i)^\top\succeq0.
\]
Each term obeys
\[
  \norm{X_i}_2
  =\omega_i\norm{\psi^\perp(\bx_i)}^2
  =\frac{r}{N K_M(\bx_i)}K_M(\bx_i)
  =\frac rN=:L_N,
\]
using $K_M=\norm{\psi^\perp}^2$. This uniform row bound is the role of the inverse-Christoffel
weight. For a point block, the corresponding PSD summand has trace $r/N$ and hence spectral norm at
most $r/N$, so the same argument applies. By Lemma~\ref{lem:unb}, all eigenvalues of
$\E\widehat G^\perp=I_r$ equal one, so
the smallest and largest eigenvalues of this expectation both equal one.
Writing $\lambda_{\min}$ and $\lambda_{\max}$ for the smallest and largest eigenvalues, the matrix
Chernoff inequality \cite{Tropp2012,Tropp2015} gives for $\varepsilon\in(0,1)$
\[
\Pr[\lambda_{\min}(\widehat G^\perp)\le 1-\varepsilon]\le r\,e^{-\varepsilon^2/(2L_N)},\quad
\Pr[\lambda_{\max}(\widehat G^\perp)\ge 1+\varepsilon]\le r\,e^{-\varepsilon^2/(3L_N)}.
\]
Both right-hand sides are at most $\delta/2$ once
$L_N=r/N\le\varepsilon^2/(3\ln(2r/\delta))$. This is exactly
\eqref{eq:sample-complexity-bound}; a union bound yields \eqref{eq:gram-concentration-bound}.
\end{proof}

\begin{corollary}[Conditioning of the retained interior block]\label{cor:cond}
Define $A^\perp\in\mathbb R^{N\times r}$ by
$(A^\perp)_{i,:}=\sqrt{\omega_i}\,\psi^\perp(\bx_i)^\top$. On the event of
Theorem~\ref{thm:sc}, this whitened weighted interior matrix obeys
\begin{equation}\label{eq:retained-block-conditioning}
  \kappa_2(A^\perp)\le\sqrt{\frac{1+\varepsilon}{1-\varepsilon}},
\end{equation}
where $\kappa_2$ denotes the spectral condition number, namely the ratio of the largest to the
smallest singular value. The event in Theorem~\ref{thm:sc} ensures that $A^\perp$ has full column rank.
Once the feature realization and retained rank are fixed, the bound contains
neither $M$ nor the retained
population condition ratio. The latter is $\lambda_1(G)/\lambda_r(G)$. The required sample size still
depends on the possibly $M$-dependent retained rank $r$ through
\eqref{eq:sample-complexity-bound}.
For a system, $A^\perp\in\mathbb R^{Ns\times r}$ is obtained by stacking the $s$ weighted rows from
each sampled point block, and the same bound holds.
\end{corollary}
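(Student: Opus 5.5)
The plan is to read the conditioning bound directly off the spectral sandwich in Theorem~\ref{thm:sc}, using the identity $(A^\perp)^\top A^\perp=\widehat G^\perp$. First I will verify this identity. By the row definition $(A^\perp)_{i,:}=\sqrt{\omega_i}\,\psi^\perp(\bx_i)^\top$, we get
\[
(A^\perp)^\top A^\perp=\sum_{i=1}^N\omega_i\,\psi^\perp(\bx_i)\psi^\perp(\bx_i)^\top=\widehat G^\perp,
\]
which is exactly \eqref{eq:empirical-whitened-gram}. In the system case each point block contributes $s$ stacked rows $\sqrt{\omega_i}\,\Psi(\bx_i)T_r$. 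Their contribution to the normal matrix is $\omega_iT_r^\top\Psi(\bx_i)^\top\Psi(\bx_i)T_r$, which is the block summand in Lemma~\ref{lem:unb}, so the identity holds there as well.

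Second, on the event of Theorem~\ref{thm:sc} we have $(1-\varepsilon)I_r\preceq\widehat G^\perp\preceq(1+\varepsilon)I_r$. The squared singular values of $A^\perp$ are the eigenvalues of $\widehat G^\perp$. Hence
\[
\sigma_{\min}(A^\perp)^2\ge1-\varepsilon>0,\qquad \sigma_{\max}(A^\perp)^2\le1+\varepsilon.
\]
Positivity of the lower bound, which uses $\varepsilon<1$, gives full column rank $r$; this requires $Ns\ge r$ rows, and that is implied automatically by the eigenvalue bound. Taking the ratio and a square root yields \eqref{eq:retained-block-conditioning}.

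Third, the remarks about $M$ and the condition ratio are observations rather than further proof. The bound involves only $\varepsilon$. Any dependence on $\lambda_1/\lambda_r$ has been absorbed into the whitening $T_r$, and $M$ and $r$ enter only through the sample-size requirement \eqref{eq:sample-complexity-bound}.

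I expect no genuine obstacle. The only point needing care is the bookkeeping in the block case: each row of a point block must carry the same weight $\omega_i$, so that the stacked normal matrix coincides with the summand that Theorem~\ref{thm:sc} controls.
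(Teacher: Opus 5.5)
Your proposal is correct and follows the paper's proof: identify $(A^\perp)^\top A^\perp=\widehat G^\perp$, then read the singular values of $A^\perp$ off the sandwich $(1-\varepsilon)I_r\preceq\widehat G^\perp\preceq(1+\varepsilon)I_r$ from Theorem~\ref{thm:sc}. Your explicit checks of the normal-matrix identity in the block case (each stacked row carrying the common weight $\omega_i$) and of full column rank via $1-\varepsilon>0$ simply spell out steps that the paper leaves implicit.
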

\begin{proof}
Since $A^{\perp\top}A^\perp=\widehat G^\perp$, the singular values of $A^\perp$ are the square roots of
the eigenvalues of $\widehat G^\perp$. On the event of Theorem~\ref{thm:sc}, these eigenvalues lie in
$[1-\varepsilon,1+\varepsilon]$, which gives \eqref{eq:retained-block-conditioning}.
\end{proof}

\begin{remark}[Interpretation of the conditioning bound]\label{rem:conditioning-scope}
The corollary is a statement about the retained, whitened interior residual block. Its purpose is to
show why whitening removes population anisotropy and residual-Christoffel sampling transfers that
normalization to the sampled block. Before whitening, the retained population ratio
$\lambda_1(G)/\lambda_r(G)$ may be large; the scalar assembly normalization from $\omega_i$ to
$\bar\omega_i$ does not change the interior-block condition number. Unwhitened conditioning,
boundary augmentation, and approximation quality are assessed separately in Section~\ref{sec:results}.
\end{remark}

\subsection{Operator spectrum and ridge effective dimension}\label{sec:effdim}
The retained-rank concentration bound is most informative when the effective residual dimension is small
relative to the ambient feature count. We relate the normalized feature Gram $G/M$ to a
residual-kernel operator and then state the finite-matrix ridge sampling result separately.
Multiplicative convergence of the ridge effective dimension additionally requires trace-resolvent
control beyond this operator-norm approximation.

In this subsection, specialize to single-layer random features
$\varphi(\bx;\theta)=\sigma(w\!\cdot\!\bx+b_\theta)$, where $\sigma$ is the activation function,
$\theta=(w,b_\theta)$ consists of a feature weight $w\in\mathbb R^d$ and bias
$b_\theta\in\mathbb R$, and
$\theta\sim\rho$ is drawn from the parameter law $\rho$. Write
$\zeta_\theta(\bx)=(L\varphi)(\bx;\theta)$, so the residual features are independent and identically
distributed (i.i.d.) random fields. Assume that $\rho$ has bounded support and that the required
derivatives of $\sigma$ are bounded on the corresponding compact affine image of $\Omega$. We also
assume
$R_\zeta:=\sup_{\theta\in\operatorname{supp}(\rho)}\|\zeta_\theta\|_{L^2(\mu)}^2<\infty$; this holds,
for example, for bounded-coefficient differential operators of finite order under the preceding
feature assumptions. For $\bx,\bx'\in\Omega$, define the
residual kernel and its operator on $H=L^2(\mu)$ by
\begin{equation}\label{eq:residual-kernel-operator}
  \begin{aligned}
  k_L(\bx,\bx')&=\E_{\theta}[\zeta_\theta(\bx)\zeta_\theta(\bx')],\\
  (\mathcal T v)(\bx)&=\int_\Omega k_L(\bx,\bx')\,v(\bx')\,d\mu(\bx'),
  \qquad v\in H.
  \end{aligned}
\end{equation}
The operator $\mathcal T$ is self-adjoint, positive semidefinite, and trace-class, with
\[
  \mathrm{tr}(\mathcal T)=\E_\theta\|\zeta_\theta\|_H^2\le R_\zeta.
\]
Let its eigenvalues be
$\tau_1\ge\tau_2\ge\cdots\ge0$. We exclude the trivial case $\mathcal T=0$ and define
$\widetilde d=\mathrm{tr}(\mathcal T)/\tau_1$, the dimensionless trace ratio of $\mathcal T$. We
use $\|\cdot\|_2$ for the induced operator norm on $H$ as well as for the matrix spectral norm. We
reserve $T_r$ for the finite-dimensional whitening
matrix used in the algorithms. For a realized feature set, define
$\Phi_M:\mathbb R^M\to H$ by $\Phi_Mc=\sum_{m=1}^M c_m\zeta_{\theta_m}$. With $*$ denoting the
Hilbert-space adjoint, we have
$G=\Phi_M^*\Phi_M$ and $S_M=\Phi_M\Phi_M^*$, so their nonzero eigenvalues coincide.
This is a sample-covariance-operator setting; sharper concentration results are available under
additional distributional assumptions \cite{KoltchinskiiLounici2017}, while the elementary bound below
is sufficient for the present normalization argument.

\begin{theorem}[Operator limit]\label{thm:oplim}
For $h_1,h_2\in H$, let $h_1\otimes h_2$ denote the rank-one operator
$(h_1\otimes h_2)v=\langle h_2,v\rangle_Hh_1$ for $v\in H$, where
$\langle\cdot,\cdot\rangle_H$ is the inner product of $H$. Let
$S_M=\sum_{m=1}^M\zeta_{\theta_m}\otimes\zeta_{\theta_m}$, and let $\lambda_j(G)$ denote the
eigenvalues of the $M\times M$ feature-index Gram $G$, padded by zeros for $j>M$. For a relative
operator tolerance $\varepsilon_{\rm op}\in(0,1)$ and failure probability $\delta\in(0,1)$, if
\begin{equation}\label{eq:operator-limit-sample-condition}
  M\ge \frac{R_\zeta\,\mathrm{tr}(\mathcal T)}{\delta\varepsilon_{\rm op}^2\tau_1^2}
  =\frac{R_\zeta\widetilde d}{\delta\varepsilon_{\rm op}^2\tau_1},
\end{equation}
then, with probability at least $1-\delta$ over the random-feature realization,
\begin{equation}\label{eq:operator-limit-bound}
  \|M^{-1}S_M-\mathcal T\|_2\le\varepsilon_{\rm op}\tau_1,
  \qquad
  \sup_{j\ge1}|\lambda_j(G)/M-\tau_j|\le\varepsilon_{\rm op}\tau_1.
\end{equation}
In particular, $M^{-1}S_M$ converges to $\mathcal T$ in operator norm almost surely, and the normalized
spectrum of $G$ converges uniformly to that of $\mathcal T$.
\end{theorem}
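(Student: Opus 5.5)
The bound $\delta\varepsilon_{\rm op}^2$ in the sample condition points to a second-moment (Chebyshev/Markov) argument in Hilbert--Schmidt norm, not to a matrix Bernstein inequality. The plan is to view $Z_m:=\zeta_{\theta_m}\otimes\zeta_{\theta_m}$ as i.i.d. random elements of the Hilbert space $\mathrm{HS}(H)$ of Hilbert--Schmidt operators, and then bound $\|\cdot\|_2\le\|\cdot\|_{\rm HS}$.

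\textbf{Step 1 (mean and second moment).} First verify $\E_\theta[Z]=\mathcal T$. For $v\in H$, Fubini gives $\E_\theta[\langle\zeta_\theta,v\rangle_H\zeta_\theta](\bx)=\int k_L(\bx,\bx')v(\bx')\,d\mu(\bx')$. Bochner integrability holds because $\|Z\|_{\rm HS}=\|\zeta_\theta\|_H^2\le R_\zeta$ on $\operatorname{supp}\rho$. Next,
\[
\E\|Z-\mathcal T\|_{\rm HS}^2\le\E\|Z\|_{\rm HS}^2=\E_\theta\|\zeta_\theta\|_H^4\le R_\zeta\,\E_\theta\|\zeta_\theta\|_H^2=R_\zeta\,\mathrm{tr}(\mathcal T).
\]

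\textbf{Step 2 (variance of the mean and Markov).} Independence and the Hilbert-space Pythagoras identity for centered i.i.d. elements give
\[
\E\|M^{-1}S_M-\mathcal T\|_{\rm HS}^2=M^{-1}\E\|Z-\mathcal T\|_{\rm HS}^2\le R_\zeta\mathrm{tr}(\mathcal T)/M.
\]
Markov's inequality applied to the squared norm then yields
\[
\Pr\big[\|M^{-1}S_M-\mathcal T\|_{\rm HS}>\varepsilon_{\rm op}\tau_1\big]\le \frac{R_\zeta\mathrm{tr}(\mathcal T)}{M\varepsilon_{\rm op}^2\tau_1^2}\le\delta
\]
under \eqref{eq:operator-limit-sample-condition}. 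Since $\|\cdot\|_2\le\|\cdot\|_{\rm HS}$, this proves the first inequality in \eqref{eq:operator-limit-bound}. The identity $\mathrm{tr}(\mathcal T)/\tau_1^2=\widetilde d/\tau_1$ is immediate.

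\textbf{Step 3 (spectra).} $G=\Phi_M^*\Phi_M$ and $S_M=\Phi_M\Phi_M^*$ share their nonzero eigenvalues. Hence the decreasingly ordered zero-padded eigenvalues of $G/M$ equal those of the compact PSD operator $M^{-1}S_M$. Weyl's inequality for compact self-adjoint PSD operators, obtained from the Courant--Fischer min-max characterization $\tau_j=\max_{\dim V=j}\min_{v\in V,\|v\|=1}\langle\cdot\, v,v\rangle$, gives $|\lambda_j(A)-\lambda_j(B)|\le\|A-B\|_2$ for all $j$. This yields the uniform eigenvalue bound. The only care needed is to check that the padding by zeros matches the min-max values beyond the rank of $M^{-1}S_M$, which holds since both operators are PSD.

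\textbf{Step 4 (almost sure convergence).} The "in particular" claim does not follow from the Markov bound alone, because $\sum_M 1/M$ diverges. Instead I would invoke the strong law of large numbers in the separable Hilbert space $\mathrm{HS}(H)$, which applies since $Z$ is bounded (Mourier's SLLN). Separability holds because $L^2(\mu)$ on $\Omega\subset\mathbb R^d$ is separable. Operator-norm almost-sure convergence follows, and Step 3 transfers it to the spectra.

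The main obstacle is the measure-theoretic bookkeeping: measurability and Bochner integrability of $\theta\mapsto\zeta_\theta\otimes\zeta_\theta$, and the Fubini identification of its mean with $\mathcal T$. The quantitative estimates themselves are elementary.
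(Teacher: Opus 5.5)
Your proposal is correct and follows essentially the same route as the paper. Both arguments bound the Hilbert--Schmidt second moment of the i.i.d. average of $\zeta_{\theta_m}\otimes\zeta_{\theta_m}$ by $R_\zeta\,\mathrm{tr}(\mathcal T)/M$, apply Markov's inequality with $\|\cdot\|_2\le\|\cdot\|_{\mathrm{HS}}$, and transfer the bound to the spectra via cospectrality of $G$ and $S_M$ and the eigenvalue perturbation inequality; the almost-sure claim comes from the strong law in the separable Hilbert--Schmidt space (your $\E\|Z-\mathcal T\|_{\mathrm{HS}}^2\le\E\|Z\|_{\mathrm{HS}}^2$ is the paper's identity $\E\|X_1\|_{\mathrm{HS}}^2-\|\mathcal T\|_{\mathrm{HS}}^2$ with the last term dropped).
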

The proof is given in Appendix~\ref{app:kernel}.

For a constant-coefficient differential operator of order $m_L$,
\[
  L=\sum_{|\alpha|\le m_L}a_\alpha\partial^\alpha,
\]
where $a_\alpha\in\mathbb R$ are the operator coefficients. Here
$\alpha=(\alpha_1,\ldots,\alpha_d)\in\mathbb N_0^d$ is a multi-index,
$\mathbb N_0$ denotes the nonnegative integers,
$|\alpha|=\sum_{j=1}^d\alpha_j$, and
$\partial^\alpha=\partial_{x_1}^{\alpha_1}\cdots\partial_{x_d}^{\alpha_d}$. The residual feature has
the general form
\begin{equation}\label{eq:constant-coefficient-residual-feature}
  \zeta_\theta(\bx)
  =\sum_{|\alpha|\le m_L}a_\alpha w^\alpha
    \sigma^{(|\alpha|)}(w\!\cdot\!\bx+b_\theta).
\end{equation}
Here $w^\alpha=\prod_{j=1}^d w_j^{\alpha_j}$. Formula~\eqref{eq:constant-coefficient-residual-feature}
is for scalar constant-coefficient operators; variable coefficients and vector-valued systems require
the corresponding product and component structure.
Thus operators containing several derivative orders, including Helmholtz and
convection--diffusion operators, require a finite sum rather than a single derivative of $\sigma$.

\begin{theorem}[Spectral decay for uniformly analytic residual kernels]\label{thm:decay}
Assume that $\Omega$ is contained in a Cartesian box $D\subset\mathbb R^d$ and that $\mu$ is a
probability measure supported on $\Omega$. Suppose that, for some tensor-product Bernstein
polyellipse $\mathcal E_\varrho(D)$ with $\varrho>1$, the map
$\xi\mapsto k_L(\xi,\bx')$ is analytic on $\mathcal E_\varrho(D)$ for every $\bx'\in\Omega$ and, for
some $C_k>0$, satisfies
\[
  \sup_{\xi\in\mathcal E_\varrho(D),\,\bx'\in\Omega}|k_L(\xi,\bx')|\le C_k.
\]
Then there are constants $C_1\ge\tau_1$ and $c_1>0$, depending on
$(C_k,\varrho,D,d)$, such that
\begin{equation}\label{eq:spectral-decay-bound}
  \tau_j\le C_1 e^{-c_1 j^{1/d}},\qquad j\ge1.
\end{equation}
\end{theorem}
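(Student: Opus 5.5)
We prove the bound \eqref{eq:spectral-decay-bound} by low-rank approximation of the kernel together with the variational (Courant--Fischer / Allakhverdiev) characterization of eigenvalues of compact self-adjoint operators. The key observation is that for any operator $\mathcal F$ of rank at most $n$ we have $\tau_{n+1}\le\|\mathcal T-\mathcal F\|_2$. So it suffices to construct, for each degree $p\ge0$, a separable kernel of rank $n_p=(p+1)^d$ whose integral operator on $H=L^2(\mu)$ approximates $\mathcal T$ to accuracy $Ce^{-cp}$.

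\textbf{Construction.} Write $D=\prod_{i=1}^d[a_i,b_i]$ and take the tensor-product Chebyshev interpolant in the first variable. Set
\[
  k_p(\xi,\bx')=\sum_{\alpha} k_L(\xi_\alpha,\bx')\,\ell_\alpha(\xi),
\]
where the $\xi_\alpha$ are the $(p+1)^d$ tensor Chebyshev nodes mapped to $D$ and the $\ell_\alpha$ are the corresponding Lagrange polynomials. This kernel has the form $\sum_\alpha \ell_\alpha(\xi)g_\alpha(\bx')$ with $g_\alpha=k_L(\xi_\alpha,\cdot)$, which is bounded by $C_k$ and hence lies in $H$. Its operator $\mathcal F_p$ therefore has rank at most $(p+1)^d$.

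\textbf{Interpolation error.} The classical bound for interpolating a function analytic and bounded on the Bernstein polyellipse $\mathcal E_\varrho(D)$ is applied coordinatewise: interpolate one variable at a time. Each one-dimensional step uses the Bernstein estimate $\frac{4M}{\varrho-1}\varrho^{-p}$, combined with the Lebesgue constants $\Lambda_p\lesssim\log p$ of the previously interpolated directions. This yields
\[
  \sup_{\xi\in D,\,\bx'\in\Omega}|k_L(\xi,\bx')-k_p(\xi,\bx')|\le C(d,\varrho)\,C_k\,(1+\log(p+1))^{d-1}\varrho^{-p}.
\]
The main obstacle is exactly this step. One must justify that the partially interpolated function remains analytic and bounded in the remaining variables on the polyellipse; the partial interpolant is a finite combination of values of $k_L$ with coefficients bounded by Lebesgue constants, so this holds. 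The logarithmic factor is then absorbed by slightly decreasing the rate, $\varrho^{-p}(\log p)^{d-1}\le C'\tilde\varrho^{-p}$ for any $1<\tilde\varrho<\varrho$.

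\textbf{Operator bound and conclusion.} Since $\mu$ is a probability measure, the operator norm on $L^2(\mu)$ is dominated by the sup norm of the kernel difference: by Schur's test or Hilbert--Schmidt, $\|\mathcal T-\mathcal F_p\|_2\le\|k_L-k_p\|_{L^2(\mu\otimes\mu)}\le\sup|k_L-k_p|$. Hence $\tau_{(p+1)^d+1}\le C'C_k\tilde\varrho^{-p}$.

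Given $j\ge 2$, choose $p=\lfloor (j-1)^{1/d}\rfloor-1$, or $p=0$ if this is negative, so that $(p+1)^d+1\le j$. Monotonicity of the eigenvalues gives $\tau_j\le C'C_k\tilde\varrho^{\,2-j^{1/d}}$, using $p\ge j^{1/d}-2$. Setting $c_1=\log\tilde\varrho$ gives \eqref{eq:spectral-decay-bound}. We then enlarge $C_1$ to $\max(C'C_k\tilde\varrho^2,\,\tau_1e^{c_1})$ so that $j=1$ is covered and $C_1\ge\tau_1$. Finally, $\tau_1\le\sup|k_L|\le C_k$, so $C_1$ depends only on $(C_k,\varrho,D,d)$.
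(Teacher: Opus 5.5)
Your proof is correct and follows essentially the same route as the paper. Both arguments build a rank-$(p+1)^d$ tensor-product Chebyshev approximation of $k_L$ in the first variable, bound $\|\mathcal T-\mathcal F_p\|_2$ by the sup of the kernel error via the Hilbert--Schmidt norm, apply the approximation-number characterization $\tau_{(p+1)^d+1}\le\|\mathcal T-\mathcal F_p\|_2$, and then take $p\sim j^{1/d}$. The only difference is that you use Chebyshev interpolation where the paper truncates the Chebyshev expansion. This costs you Lebesgue-constant factors, which you correctly absorb by lowering $\varrho$ to $\tilde\varrho$, and your explicit bookkeeping of $p\ge j^{1/d}-2$ and of $C_1$ is more careful than the paper's.
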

The proof is given in Appendix~\ref{app:kernel}.

The stretched-exponential form is consistent with classical spectral results for analytic kernels
\cite{LittleReade1984} and related kernel-eigenfunction approximation theory
\cite{SantinSchaback2016}; the theorem above states the precise one-variable analyticity condition used
here.

A sufficient feature-level condition for the hypothesis of Theorem~\ref{thm:decay} is that every
derivative $\sigma^{(q)}$, $0\le q\le m_L$, occurring in
\eqref{eq:constant-coefficient-residual-feature} be analytic and uniformly bounded on the compact
affine image
\[
  \mathcal Z_\varrho
  :=\{w\!\cdot\!\xi+b_\theta:\xi\in\mathcal E_\varrho(D),\
  (w,b_\theta)\in\mathrm{supp}(\rho)\}.
\]
Sine satisfies this condition on every fixed polyellipse. For tanh, the parameter support and
polyellipse must be chosen so that $\mathcal Z_\varrho$ has positive distance from the pole set
$\{i\pi(k+\tfrac12):k\in\mathbb Z\}$, where $i$ is the imaginary unit and $\mathbb Z$ is the set of
integers. The analytic decay conclusion therefore requires this complex-domain separation from the
pole set. For variable-coefficient and elasticity residual kernels, the same conclusion follows
whenever the kernel satisfies the analytic continuation hypothesis of Theorem~\ref{thm:decay};
Section~\ref{sec:results} examines their spectra numerically.

\begin{corollary}[Operator ridge effective dimension]\label{cor:deff}
Under Theorem~\ref{thm:decay},
\begin{equation}\label{eq:operator-effective-dimension-bound}
  d_{\mathrm{eff}}^{\mathcal T}(\ell):=\sum_j\frac{\tau_j}{\tau_j+\ell}
  \le C_2(1+\log(C_1/\ell))^d,
  \qquad 0<\ell\le\tau_1.
\end{equation}
Here $\ell$ is the positive operator ridge parameter, and $C_2$ depends only on $C_1,c_1$, and $d$.
\end{corollary}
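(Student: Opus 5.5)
The plan is to split the sum defining $d_{\mathrm{eff}}^{\mathcal T}(\ell)$ at a cutoff index chosen from the decay bound \eqref{eq:spectral-decay-bound} of Theorem~\ref{thm:decay}. Each term $\tau_j/(\tau_j+\ell)$ is at most $\min(1,\tau_j/\ell)$. Set
\[
  J:=\left\lceil \Big(\tfrac{1}{c_1}\log(C_1/\ell)\Big)^{d}\right\rceil ,
\]
which is well defined and nonnegative because $\ell\le\tau_1\le C_1$ gives $\log(C_1/\ell)\ge0$. For $j>J$ we have $c_1j^{1/d}\ge\log(C_1/\ell)$, hence $\tau_j\le C_1e^{-c_1j^{1/d}}\le\ell$. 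Head terms $j\le J$ are bounded by $1$, contributing at most $J\le 1+c_1^{-d}\log(C_1/\ell)^d$.

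For the tail I will bound
\[
  \sum_{j>J}\frac{\tau_j}{\ell}\le\frac{C_1}{\ell}\sum_{j>J}e^{-c_1j^{1/d}},
\]
compare the sum with the integral $\int_J^\infty e^{-c_1t^{1/d}}\,dt$ (the integrand is decreasing), and substitute $s=c_1t^{1/d}$. This gives
\[
  \frac{d}{c_1^d}\int_{s_0}^\infty s^{d-1}e^{-s}\,ds,
  \qquad s_0=c_1J^{1/d}\ge\log(C_1/\ell).
\]
The standard incomplete-gamma estimate $\int_{s_0}^\infty s^{d-1}e^{-s}ds\le C_d(1+s_0)^{d-1}e^{-s_0}$ then applies. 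Two points need care here. First, $s_0$ may exceed $\log(C_1/\ell)$ by up to $c_1$, but since $(1+s)^{d-1}e^{-s}$ is eventually decreasing, this only changes constants. Second, the exact bound is $C_d\max_{s\ge a}(1+s)^{d-1}e^{-s}$ with $a=\log(C_1/\ell)$, and this is at most $C'_d(1+a)^{d-1}e^{-a}$. The factor $e^{-a}=\ell/C_1$ cancels the prefactor $C_1/\ell$. The tail is therefore at most $C(d,c_1)(1+\log(C_1/\ell))^{d-1}$.

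Adding the head and tail contributions gives
\[
  d_{\mathrm{eff}}^{\mathcal T}(\ell)\le 1+c_1^{-d}\log(C_1/\ell)^d+C(1+\log(C_1/\ell))^{d-1}\le C_2\,(1+\log(C_1/\ell))^d,
\]
with $C_2$ depending on $c_1$ and $d$ (and trivially on $C_1$). The only delicate step is the tail estimate: the uniform incomplete-gamma bound must be stated over all $s_0\ge0$, including the regime $\ell$ near $\tau_1$ where $\log(C_1/\ell)$ is small. In that regime $(1+s_0)^{d-1}e^{-s_0}$ is bounded by a constant depending only on $d$, so the final bound still holds.
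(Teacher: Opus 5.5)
Your proof is correct and follows essentially the same route as the paper: split the sum at $J\approx(c_1^{-1}\log(C_1/\ell))^d$, bound the head by its number of terms, and bound the tail by $(C_1/\ell)\int_J^\infty e^{-c_1t^{1/d}}\,dt$, whose incomplete-gamma estimate supplies a factor $\ell/C_1$ that cancels the prefactor. Your detour about $s_0$ exceeding $\log(C_1/\ell)$ is unnecessary, because $\int_{s_0}^\infty s^{d-1}e^{-s}\,ds\le\int_{\log(C_1/\ell)}^\infty s^{d-1}e^{-s}\,ds$ holds directly since the integrand is positive.
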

The proof is given in Appendix~\ref{app:kernel}.

We next return to the realized finite Gram. The following continuous-row statement is the
finite-dimensional ridge-leverage guarantee used to interpret the sampling experiment.
\begin{theorem}[Ridge leverage concentration]\label{thm:ridge}
Fix $\gamma>0$ and a nonzero finite Gram $G$. Define
\begin{equation}\label{eq:ridge-leverage-score}
  K_{M,\gamma}(\bx)=\psi(\bx)^\top(G+\gamma I_M)^{-1}\psi(\bx),\qquad
  d_{\mathrm{eff}}^G(\gamma)=\mathrm{tr}(G(G+\gamma I_M)^{-1}).
\end{equation}
Sample independently from the probability measure
\begin{equation}\label{eq:ridge-sampling-measure}
  dp_\gamma(\bx)=\frac{K_{M,\gamma}(\bx)}{d_{\mathrm{eff}}^G(\gamma)}\,d\mu(\bx)
\end{equation}
on the support where $K_{M,\gamma}>0$, and use weights
\begin{equation}\label{eq:ridge-sampling-weight}
  \omega_i=\frac{d_{\mathrm{eff}}^G(\gamma)}{N K_{M,\gamma}(\bx_i)}.
\end{equation}
For a relative Gram tolerance $\varepsilon\in(0,1)$ and failure probability $\delta\in(0,1)$, if
\begin{equation}\label{eq:ridge-sample-complexity}
  N\ge \frac{8}{3}\varepsilon^{-2}d_{\mathrm{eff}}^G(\gamma)
  \log(2M/\delta),
\end{equation}
then, with probability at least $1-\delta$, the weighted Gram
$\widehat G=\sum_i\omega_i\psi_i\psi_i^\top$, where $\psi_i=\psi(\bx_i)$, obeys
\begin{equation}\label{eq:ridge-gram-concentration}
  (1-\varepsilon)(G+\gamma I_M)
  \preceq\widehat G+\gamma I_M
  \preceq(1+\varepsilon)(G+\gamma I_M).
\end{equation}
\end{theorem}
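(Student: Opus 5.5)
The proof whitens by the ridge-regularized Gram and then applies a matrix Bernstein inequality to the centered whitened summands. Set $W:=(G+\gamma I_M)^{-1/2}$ and $\xi(\bx):=W\psi(\bx)\in\mathbb R^M$, so that $\|\xi(\bx)\|_2^2=K_{M,\gamma}(\bx)$. Conjugating \eqref{eq:ridge-gram-concentration} by $W$ shows that the claim is equivalent to
\[
  \norm{W\widehat G W-WGW}_2\le\varepsilon\,\norm{W(G+\gamma I_M)W}_2=\varepsilon .
\]
Indeed, $W(\widehat G+\gamma I)W-I=W\widehat GW-WGW$, since $WGW+\gamma W^2=I$. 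Write $E:=WGW=G(G+\gamma I)^{-1}$. Then $0\preceq E\preceq I$, $\|E\|_2\le1$, and $\mathrm{tr}(E)=d_{\mathrm{eff}}^G(\gamma)=:d$.

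Define $Y_i:=\omega_i\xi(\bx_i)\xi(\bx_i)^\top$. The same cancellation as in Lemma~\ref{lem:unb} gives $\E Y_i=E/N$; the set where $K_{M,\gamma}=0$ is harmless, because there $\xi=0$. The weights \eqref{eq:ridge-sampling-weight} give the deterministic bound $\|Y_i\|_2=\omega_iK_{M,\gamma}(\bx_i)=d/N$.

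We apply matrix Bernstein to the centered summands $Z_i:=Y_i-E/N$.
\begin{itemize}
\item Each summand is bounded by $\|Z_i\|_2\le\max(d/N,1/N)$.
\item For the variance, $Y_i^2=(d/N)Y_i$, so $\E Y_i^2=(d/N^2)E$ and $\E Z_i^2\preceq \E Y_i^2\preceq (d/N^2)E$.
\item Summing over $i$, the variance proxy is $\sigma^2\le d/N$.
\end{itemize}
The uniform bound needs $d\ge1$, or else we replace it by $\max(d,1)$. I would check whether $d<1$ needs separate treatment; otherwise the constant absorbs it, which is likely why the statement uses $\log(2M/\delta)$ rather than an intrinsic dimension.

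Matrix Bernstein in the form of \cite{Tropp2015}, in dimension $M$, then gives
\[
  \Pr\big[\|\textstyle\sum_i Z_i\|_2\ge\varepsilon\big]\le 2M\exp\!\Big(\frac{-\varepsilon^2/2}{\sigma^2+L\varepsilon/3}\Big).
\]
With $\sigma^2\le d/N$ and $L\le d/N$, the denominator is at most $(d/N)(1+\varepsilon/3)\le (4/3)(d/N)$ for $\varepsilon<1$. The exponent is therefore at least $3N\varepsilon^2/(8d)$. Requiring $2M\exp(-3N\varepsilon^2/(8d))\le\delta$ is exactly \eqref{eq:ridge-sample-complexity}. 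Undoing the conjugation by $W^{-1}$ then yields both Loewner inequalities.

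The main obstacle is bookkeeping rather than conception. First, the bound $L$ on $\|Z_i\|_2$ must be controlled by $d/N$ even when $d<1$. I would first try the one-sided version: $Z_i\preceq Y_i$, so the upper tail only needs $\lambda_{\max}(Z_i)\le d/N$. For the lower tail, $-Z_i\preceq E/N\preceq I/N$. If $d<1$ breaks the constant, I would use the Chernoff-type argument of Theorem~\ref{thm:sc} on $Y_i$, with $\mu_{\max}\le1$ and summand bound $d/N$, plus a shift. Second, the Bernstein constant must come out exactly as $8/3$; the computation above suggests it does.
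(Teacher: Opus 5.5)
Your argument is the paper's proof. It uses the same whitening by $(G+\gamma I_M)^{-1/2}$, the same deterministic summand norm $d/N$ from the inverse-score weights, and the identity $Y_i^2=(d/N)Y_i$ giving variance proxy at most $d/N$. It then applies matrix Bernstein in dimension $M$ to obtain the exponent $3N\varepsilon^2/(8d)$.

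The case $d<1$ that you leave open is not an obstacle. Your $\max(d,1)$ fallback should be dropped, because it would not reproduce the stated constant when $\varepsilon>d$. The fix is one line: since $E\succeq0$, you have $\|E\|_2\le\mathrm{tr}(E)=d$. Equivalently, $\E Y_i\preceq(d/N)I_M$, because $Y_i\preceq(d/N)I_M$ almost surely. Hence $-(d/N)I_M\preceq -E/N\preceq Z_i\preceq Y_i\preceq(d/N)I_M$, so $\|Z_i\|_2\le d/N$ for every $d>0$. This is exactly the step the paper takes when it notes $0\preceq Y_i,\E Y_i\preceq\mathcal R_\gamma I_M$.
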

\begin{proof}
Let $C_\gamma=(G+\gamma I_M)^{-1/2}G(G+\gamma I_M)^{-1/2}$ and set
\[
  Y_i=(G+\gamma I_M)^{-1/2}
  \omega_i\psi_i\psi_i^\top(G+\gamma I_M)^{-1/2}.
\]
Set $\mathcal R_\gamma=d_{\mathrm{eff}}^G(\gamma)/N$. Then
$\E Y_i=C_\gamma/N$, $\|Y_i\|_2=\mathcal R_\gamma$, $Y_i^2=\mathcal R_\gamma Y_i$, and
$\mathrm{tr}(C_\gamma)=d_{\mathrm{eff}}^G(\gamma)$. Moreover,
$0\preceq Y_i,\E Y_i\preceq \mathcal R_\gamma I_M$, so for $Z_i=Y_i-\E Y_i$,
$\|Z_i\|_2\le \mathcal R_\gamma$ and
\[
  \left\|\sum_i\E Z_i^2\right\|_2
  \le \left\|\sum_i\E Y_i^2\right\|_2
  =\mathcal R_\gamma\|C_\gamma\|_2\le \mathcal R_\gamma.
\]
The self-adjoint matrix Bernstein inequality \cite{Tropp2012,Tropp2015}, together with
$\varepsilon<1$, gives
\[
  \Pr\!\left[\left\|\sum_iY_i-C_\gamma\right\|_2>\varepsilon\right]
  \le 2M\exp\!\left(-\frac{3N\varepsilon^2}
  {8d_{\mathrm{eff}}^G(\gamma)}\right).
\]
Condition \eqref{eq:ridge-sample-complexity} makes this probability at most $\delta$. Undoing the
normalization yields
\[
  \|(G+\gamma I_M)^{-1/2}(\widehat G-G)(G+\gamma I_M)^{-1/2}\|_2\le\varepsilon,
\]
which is equivalent to
$-\varepsilon(G+\gamma I_M)\preceq\widehat G-G\preceq\varepsilon(G+\gamma I_M)$.
Adding $G+\gamma I_M$ gives \eqref{eq:ridge-gram-concentration}. This is a continuous-row result. A
finite candidate construction has an analogous deterministic discrepancy between its empirical ridge
target and the population target, in addition to row-sampling fluctuation. The result is the
counterpart of finite ridge-leverage constructions used in kernel approximation
\cite{AlaouiMahoney2015,CohenMuscoMusco2017}.
\end{proof}

\begin{remark}[Finite versus operator dimensions]\label{rem:operator-finite-deff}
Corollary~\ref{cor:deff} is an operator-level statement, whereas Theorem~\ref{thm:ridge} is conditional
on the realized finite matrix and depends on $d_{\mathrm{eff}}^G(\gamma)$. A multiplicative comparison
between $d_{\mathrm{eff}}^G(M\ell)$ and $d_{\mathrm{eff}}^{\mathcal T}(\ell)$ requires
trace-resolvent concentration in addition to the additive spectral bound
\eqref{eq:operator-limit-bound}.
\end{remark}

For the relative finite-matrix ridges used in the experiments, let the dimensionless parameter
$\rho_{\mathrm{rel}}>0$ specify $\gamma=\rho_{\mathrm{rel}}\lambda_1(G)$, and set
$\ell=\gamma/M$. On the event
\eqref{eq:operator-limit-bound}, if $\varepsilon_{\rm op}<1$, then
\[
  \rho_{\mathrm{rel}}(1-\varepsilon_{\rm op})\tau_1\le\ell
  \le\rho_{\mathrm{rel}}(1+\varepsilon_{\rm op})\tau_1.
\]
The numerical section separately checks normalized-spectrum collapse, the empirical growth of
$d_{\mathrm{eff}}^G(\gamma)$, ridge-relative Gram concentration, and the retained-rank ratio. Over the
reported range $M=400,\ldots,6400$, the observed ratio $r/M$ decreases from $0.44$ to $0.05$.

\section{Numerical results}\label{sec:results}
The numerical study examines effective residual dimension and conditioning. It then considers LSQR
convergence and accuracy versus sample size. Controlled comparisons distinguish sampling from boundary
treatment, preconditioning, feature pruning, trial-space enrichment, and direct solution.
Section~\ref{sec:setup} gives the common setup.

\subsection{Experimental setup}\label{sec:setup}
The compared interior sampling strategies are uniform sampling, randomized residual-Christoffel sampling
(Algorithm~\ref{alg:christoffel}), and deterministic leverage-greedy sampling
(Algorithm~\ref{alg:greedy}). A residual-adaptive control is included in the
accuracy-versus-sample-size experiments:
three enrichment stages sample from a density proportional to the squared current PDE residual. For
elasticity, the score is the squared Euclidean norm of the two equation residuals. All methods use the
same trial space and boundary rows within a test. Ranks, whitening maps, and Christoffel weights follow
the constructions
in Sections~\ref{sec:form} and~\ref{sec:algorithms}, with the default rank chosen by
\eqref{eq:rank-selection-rule}. Throughout the numerical section, $r$ denotes this retained residual
rank, selected separately for each benchmark before the collocation-sampling rule is changed; ratios such as
$N/r$ therefore report selected interior points per retained residual direction. In scalar tests each
point contributes one equation row, whereas an elasticity point contributes a two-row block.
Boundary rows are fixed across samplers.

Unless otherwise stated, boundary points use equal weights normalized
to total boundary weight $\beta=1$, so $\eta_j=1/N_b$ in scalar tests; for vector-valued systems, the
same point weight is applied to each boundary component row. Boundary weighting and boundary-aware
whitening are varied only in the ablation study.

Unless stated otherwise, the trial functions are
$\varphi_m(\bx)=\tanh(w_m\!\cdot\!\bx+b_m)$ with independent
$w_m\sim\operatorname{Unif}([-a_{\rm rf},a_{\rm rf}]^d)$ and
$b_m\sim\operatorname{Unif}([-a_{\rm rf},a_{\rm rf}])$, where $\operatorname{Unif}$ denotes the
uniform distribution. The feature bandwidth is
$a_{\rm rf}=6$ in 1D and in the variable-coefficient, convection--diffusion, elasticity, and geometry tests,
$a_{\rm rf}=5$ in the smooth 2D/3D Poisson tests, and $a_{\rm rf}=8$ in the high-frequency Poisson and $k=6$
Helmholtz accuracy tests. The dense scaling and sketched-construction tests instead use
$\varphi_m(\bx)=\sin(w_m\!\cdot\!\bx+b_m)$ with $a_{\rm rf}=18$ and
$b_m\sim\operatorname{Unif}[-\pi,\pi]$; the $k=16$ Helmholtz test uses the same sine law with
$a_{\rm rf}=28$.

Errors are reported as relative discrete $L^2$ errors on held-out interior test sets. Except in solver
comparisons, accuracy is computed by a singular value decomposition (SVD) in the full coefficient space with
relative cutoff $10^{-13}$; runs of the iterative least-squares solver LSQR
use absolute and relative tolerances $10^{-10}$ with the stated iteration caps. Randomized results are
reported as medians over the repetitions stated in each caption. Spectrum experiments vary both the
feature and reference-quadrature realizations. Accuracy-versus-sample-size experiments fix the feature map, reference
quadrature, boundary set, and test set, and vary the point-selection rule and finite candidate set. The
solver-scaling experiment varies the complete feature, quadrature, and sampling realization. Dotted
large-sample uniform reference errors provide comparison levels. Wall-clock timings were measured on a
server with an Intel Xeon Platinum 8476C central processing unit (CPU) using four numerical-library threads.

All scalar tests impose Dirichlet data from a manufactured solution, with the right-hand side obtained
by applying the stated operator. The Poisson operator is $L=-\Delta$. On unit boxes, the Poisson
solutions are products of sine functions; the high-frequency Poisson accuracy and scaling tests use
$u=\sin(4\pi x)\sin(4\pi y)$ on $[0,1]^2$.
Writing $x$ and $y$ for the Cartesian coordinates, the two Helmholtz tests on $[0,1]^2$ use
wavenumber $k$ in $L=-\Delta-k^2I$, where $I$ is the identity operator. They use $k=6$ with
$u=\sin(2\pi x)\sin(2\pi y)$ and $k=16$ with
$u=\sin(5\pi x)\sin(5\pi y)$. The remaining square-domain tests use
$-\nabla\cdot(a\nabla u)$ with diffusion coefficient
$a=1+0.5\sin(\pi x)\sin(\pi y)$,
$-0.1\Delta u+v\cdot\nabla u$ with convection velocity $v=(2,1)$, and
$u=\sin(2\pi x)\sin(2\pi y)$ in both scalar cases.

The Navier--Cauchy operator is
\[
  \mathcal L_{\rm el}\mathbf u
  =-\mu_{\mathrm{el}}\Delta\mathbf u
   -(\lambda_{\mathrm{el}}+\mu_{\mathrm{el}})\nabla(\nabla\!\cdot\!\mathbf u),
\]
with Lam\'e parameters $\mu_{\mathrm{el}}=\lambda_{\mathrm{el}}=1$ and manufactured displacement
$\mathbf u=(\sin(\pi x)\sin(\pi y),\sin(2\pi x)\sin(2\pi y))^\top$. The subscripts distinguish these elasticity constants
from the reference measure $\mu$ and the Gram eigenvalues $\lambda_j$; $\nabla$ and $\Delta$ denote the
spatial gradient and Laplacian.

Geometry transfer is tested on the L-shaped domain
$[-1,1]^2\setminus((0,1]\times[-1,0))$ and the annulus $0.3\le\|\bx\|_2\le1$.

\subsection{Effective residual dimension and conditioning}
We first examine the dimension of the resolved residual space and then test how its geometry controls
the conditioning and iterative solution of the collocation system.

Figure~\ref{fig:effdim} compares normalized residual-Gram spectra and ridge effective dimensions for
smooth Poisson problems. The leading 2D spectra nearly coincide after normalization, in agreement with
the operator limit in Theorem~\ref{thm:oplim}, while the resolved tails separate as $M$ increases. The
effective dimension remains below the ambient feature count in every dimension, but occupies a
larger fraction of the feature space as the spatial dimension increases.

\begin{figure}[!t]\centering
\includegraphics[width=.48\textwidth]{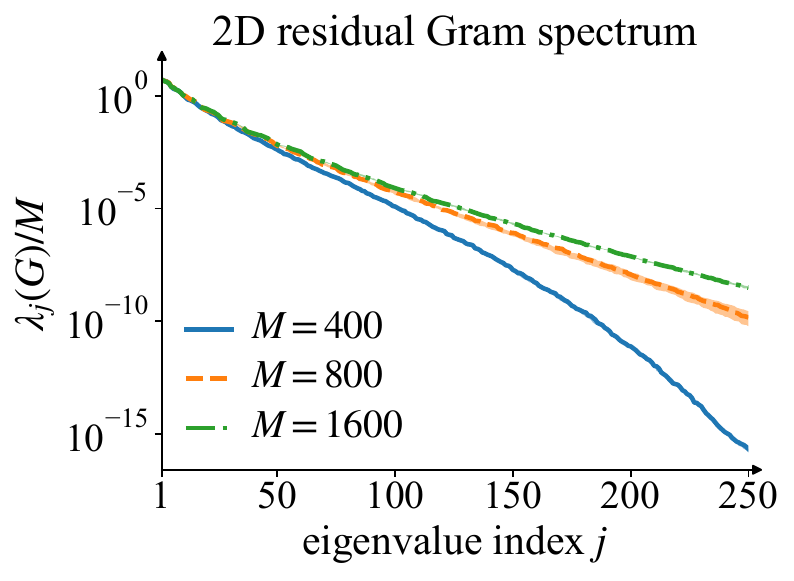}
\includegraphics[width=.48\textwidth]{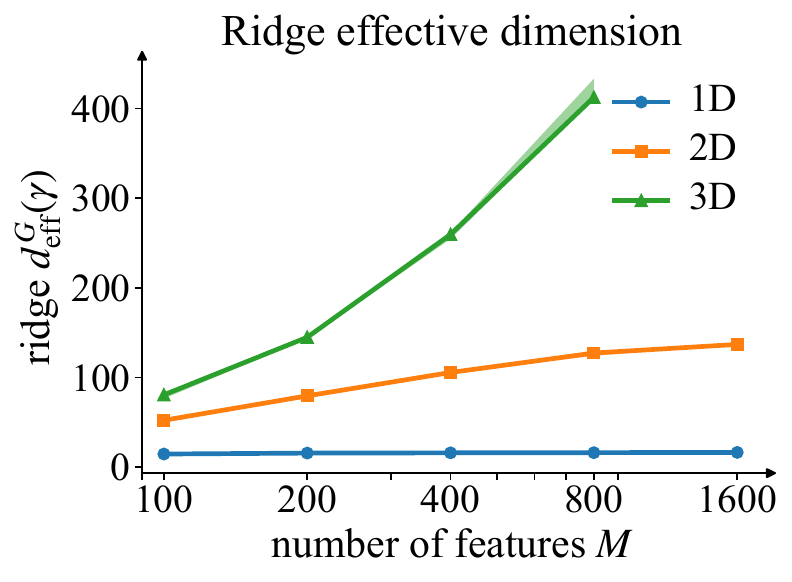}
\caption{Residual spectra and effective dimensions for smooth Poisson tests. Curves show medians over
five independently paired feature/reference draws, with interquartile bands. Left: the leading
normalized 2D residual-Gram spectra are similar across $M$, while the resolved tails separate. Right:
at $\gamma=10^{-6}\lambda_1(G)$, the ridge effective dimension grows slowly in 1D and more rapidly in
2D and 3D; the 3D curve ends at $M=800$.}
\label{fig:effdim}
\end{figure}

Figure~\ref{fig:ridge-conv} tests whether effective dimension organizes the sample size required for
Gram approximation. For representative 1D, 2D, and 3D Poisson residual Grams, it plots
$\|(G+\gamma I_M)^{-1/2}(\widehat G-G)(G+\gamma I_M)^{-1/2}\|_2$ against the normalized sample size
$N/d_{\mathrm{eff}}^G(\gamma)$. The error decreases with normalized sample size in all three cases.
The 1D curve crosses unit relative deviation, the 2D curve approaches it, and the 3D curve remains
larger, revealing dimension-dependent finite-sample constants. Because the candidate pool grows with
$N$, the curves combine row-sampling concentration with increasingly accurate finite-pool resolution.

\begin{figure}[!t]\centering
\includegraphics[width=.88\textwidth]{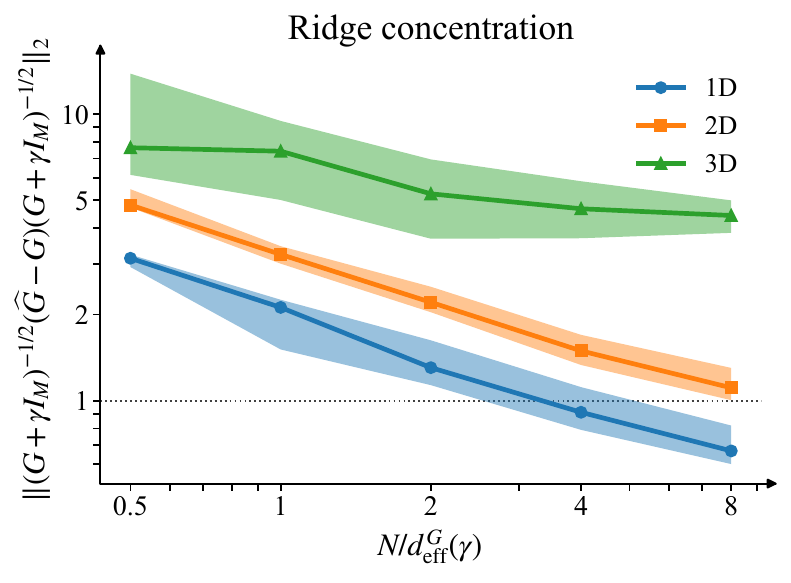}
\caption{Ridge-relative Gram error versus $N/d_{\mathrm{eff}}^G(\gamma)$. Curves show medians over
twenty independent sampling trials, with interquartile bands, for fixed representative Poisson residual Grams
($M=400$ in 1D and $M=600$ in 2D/3D, $\gamma=10^{-6}\lambda_1(G)$), using the unrebalanced
continuous-theory weights and a pool of size $P=\max(30N,6000)$. The dotted line marks unit
relative deviation. Only the 1D median falls below this line within the displayed range.}
\label{fig:ridge-conv}
\end{figure}

Table~\ref{tab:conditioning} reports the effect of sampling and whitening on condition numbers.
Unless stated otherwise, $\kappa_2$ refers to the weighted matrix actually solved, including the common
boundary block. The untransformed uniform matrices are numerically rank deficient in all but the 3D
case, whereas the sampled and whitened matrices are numerically full rank with substantially smaller
condition estimates. The scalar Poisson rows use greedy selection and the remaining rows use
randomized residual-Christoffel sampling; later controls separate point selection from whitening.

\begin{table}[!t]\centering\scriptsize
\caption{Conditioning reduction across benchmarks. $N$ and $N_b$ are interior and boundary point
counts; for elasticity, $M$ is the feature count per displacement component, so the system has $2M$
coefficient columns, and each point contributes one row per equation component. Boundary rows and
their total weight are fixed across sampling rules. Entries are median estimated condition numbers over three
point-selection trials for the boundary-augmented weighted matrix. Untransformed values above the double-precision
resolution scale indicate numerical rank deficiency.}
\label{tab:conditioning}
\setlength{\tabcolsep}{2pt}
\begin{tabular*}{\textwidth}{@{\extracolsep{\fill}}p{0.24\textwidth}lllllll@{}}
\toprule
Problem & size & $r$ & $N$ & $N_b$ & uniform $\kappa_2$ & rule & whitened $\kappa_2$ \\
\midrule
1D Poisson & $M=640$ & 24 & 400 & 2 & $4.3\times10^{17}$ & greedy & $2.1\times10^{3}$ \\
2D Poisson & $M=640$ & 216 & 800 & 300 & $6.4\times10^{17}$ & greedy & $1.9\times10^{3}$ \\
3D Poisson & $M=800$ & 586 & 2000 & 800 & $4.3\times10^{11}$ & greedy & $2.0\times10^{4}$ \\
Helmholtz $k=16$ & $M=4000$ & 270 & 3000 & 800 & $7.5\times10^{19}$ & Christoffel & $3.1\times10^{1}$ \\
Variable-coefficient elliptic & $M=700$ & 264 & 2000 & 400 & $4.3\times10^{17}$ & Christoffel & $5.3\times10^{3}$ \\
Convection--diffusion & $M=700$ & 229 & 2000 & 400 & $3.6\times10^{17}$ & Christoffel & $8.8\times10^{3}$ \\
Navier--Cauchy elasticity & $2M=1000$ & 428 & 1500 & 400 & $6.4\times10^{17}$ & Christoffel & $2.1\times10^{3}$ \\
\bottomrule
\end{tabular*}
\end{table}

Figure~\ref{fig:cond} connects conditioning to iterative convergence on the 2D Poisson benchmark. As
$M$ increases, the untransformed uniform matrix becomes numerically rank deficient and LSQR reaches
its iteration cap. Both residual-space constructions maintain numerical full rank and reduce the solve
to tens or hundreds of iterations. The paired curves show that the
conditioning improvement is directly reflected in LSQR convergence.

\begin{figure}[!t]\centering
\includegraphics[width=.48\textwidth]{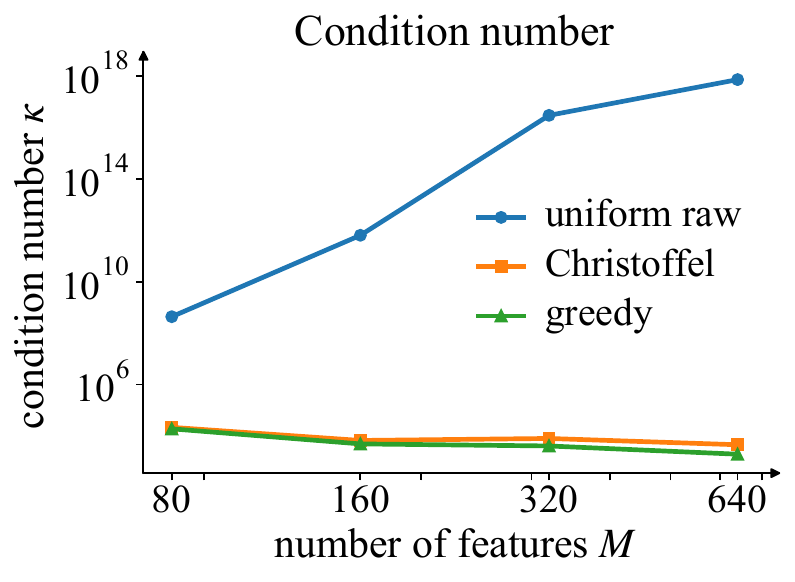}
\includegraphics[width=.48\textwidth]{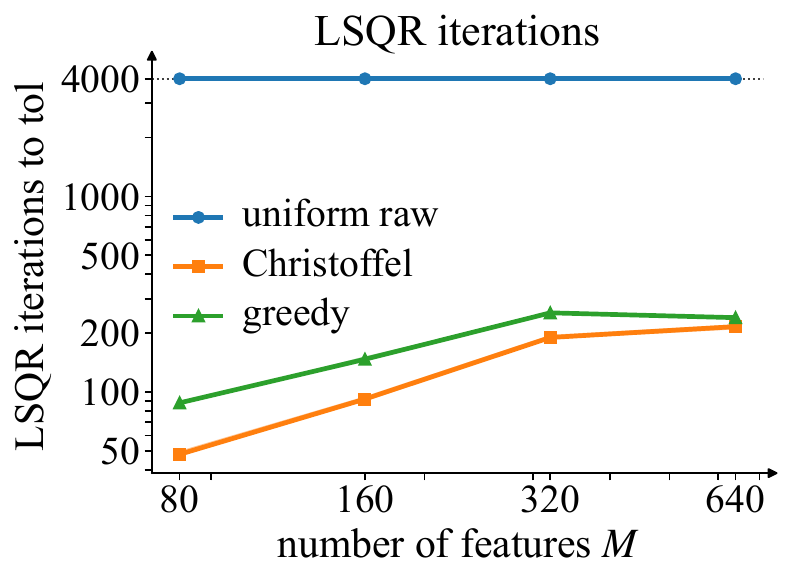}
\caption{Conditioning and LSQR convergence on the 2D Poisson benchmark. Curves show medians over
eight point-selection trials, with interquartile bands. Left: the untransformed uniform matrix becomes
numerically rank deficient, whereas both sampled and whitened matrices remain in the $10^3$--$10^4$ range.
Right: unpreconditioned LSQR reaches the 4000-iteration cap, while the transformed systems meet the tolerance in
48--254 median iterations.}
\label{fig:cond}
\end{figure}

\subsection{Accuracy versus number of collocation points}\label{sec:accuracy-efficiency}
We next examine PDE accuracy for a fixed number of interior collocation points. Conditioning measures
the stability of the residual system, whereas discretization error also depends on the approximation
space; the experiment therefore evaluates both quantities.

Figure~\ref{fig:fixed-ratio-accuracy} compares the three sampling rules at
$N=\lfloor2.5r\rfloor$, where $\lfloor\cdot\rfloor$ denotes the floor function. Uniform sampling is
most accurate for the two smallest feature spaces. Residual-Christoffel and greedy sampling become
more accurate from $M=400$ onward, and the greedy curve reaches the numerical accuracy floor at the
largest feature count.

\begin{figure}[!t]\centering
\includegraphics[width=.88\textwidth]{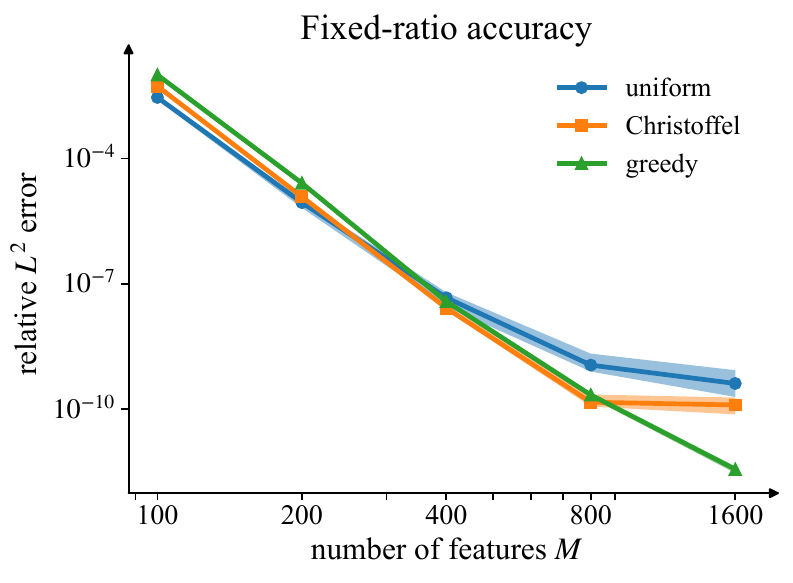}
\caption{Accuracy at fixed sample-to-rank ratio for 2D Poisson with $N=\lfloor2.5r\rfloor$. Curves show medians over 30 point-selection
trials and interquartile bands; the $M=1600$ Christoffel point uses 100 trials. Uniform sampling is
best at the two smallest feature counts, whereas the residual-space sampling rules are more accurate once
$M\ge400$.}
\label{fig:fixed-ratio-accuracy}
\end{figure}

Figure~\ref{fig:multiseed} examines accuracy as a function of $N/r$ for fixed feature maps. At the
smallest ratios, greedy selection has the lowest median error in all four scalar problems and improves
the high-frequency Poisson error by more than an order of magnitude relative to both uniform and
residual-adaptive sampling. The elasticity errors remain comparable across the four methods.
Residual-Christoffel sampling supplies the randomized rule linked to Gram concentration, while the
greedy rule is especially effective when only a small number of scalar equations can be assembled.

\begin{figure}[!t]\centering
\includegraphics[width=.98\textwidth]{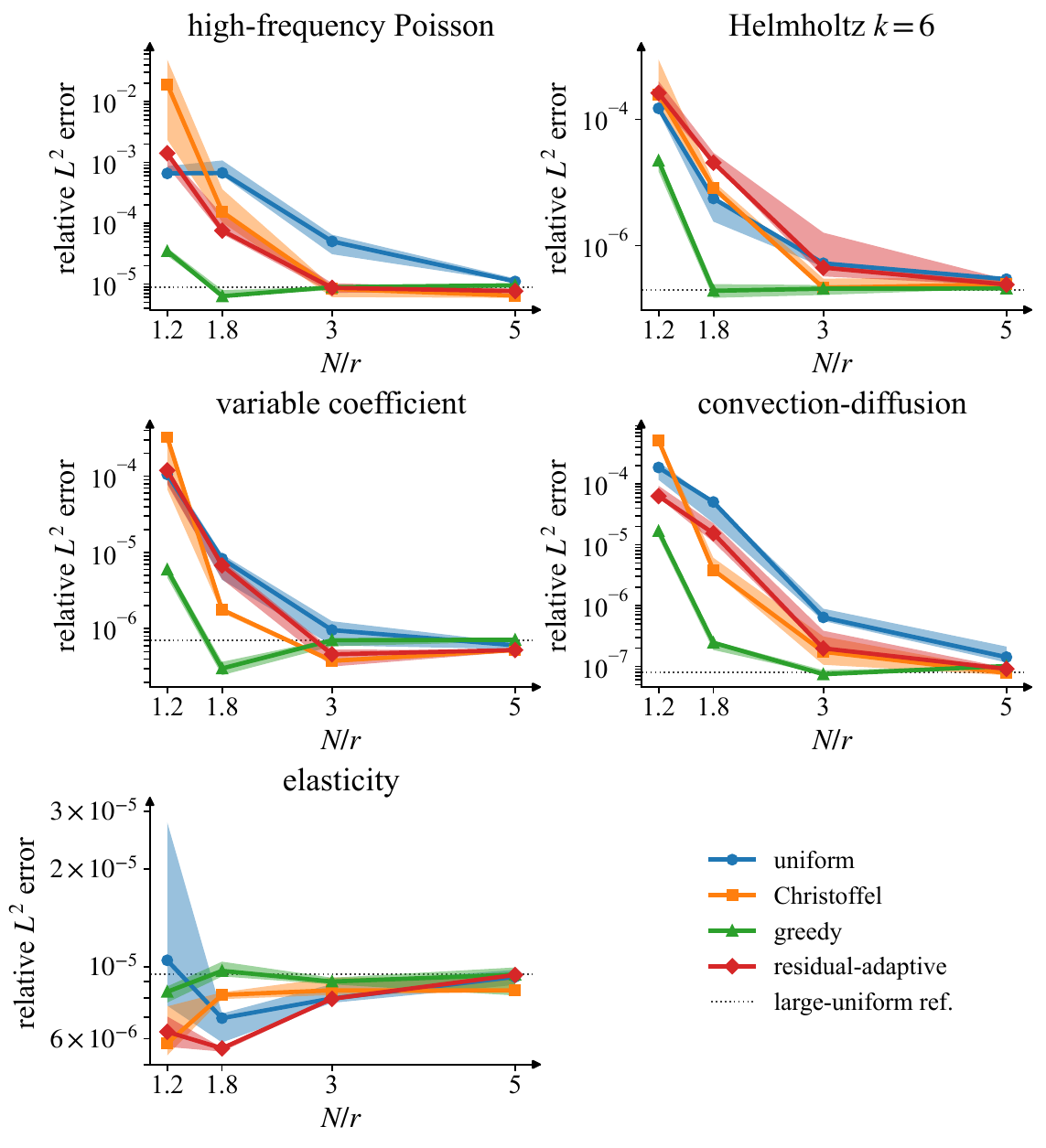}
\caption{Accuracy at prescribed sample-to-rank ratios. Curves show median relative $L^2$ error over
five point-selection and finite-candidate realizations, with interquartile bands. Dotted large-sample uniform
reference levels show the attainable uniform-sampling accuracy. Greedy has the lowest median
error at the smallest sample size in four scalar panels, while the elasticity errors are comparable.}
\label{fig:multiseed}
\end{figure}

The high-frequency Helmholtz results also separate stability from approximation quality. Greedy
selection approaches the large-sample accuracy reference with only a few equations per retained
direction, while residual-Christoffel sampling with whitening produces the well-conditioned
transformed system reported in Table~\ref{tab:conditioning}. The two constructions therefore act on
the same residual geometry but emphasize small-sample coverage and randomized norm preservation,
respectively.

\subsection{Geometry transfer and field-level validation}
Table~\ref{tab:geometry-transfer} examines smooth manufactured solutions on L-shaped and annular
domains for four scalar operators. Every untransformed uniform system reaches the LSQR cap, whereas
residual-Christoffel sampling with whitening produces convergent iterative solves and reduces the
condition estimates by several orders of magnitude. The result shows that the algebraic stabilization
persists across nonrectangular geometries and operator classes.

\begin{table}[!htbp]\centering
\caption{Geometry-transfer experiments for smooth manufactured solutions. Entries are medians over
three point-selection realizations with $M=400$, $N=1400$, and $N_b=350$. Unpreconditioned LSQR on
the uniform system reaches the 2000-iteration cap in every realization; residual-Christoffel sampling
with whitening converges in 114--427 median iterations. The reported error is obtained from an SVD
solve in the full coefficient space on the Christoffel rows.}
\label{tab:geometry-transfer}
\scriptsize
\setlength{\tabcolsep}{2.4pt}
\resizebox{\textwidth}{!}{%
\begin{tabular}{@{}llrrrrrr@{}}
\toprule
Geometry & Operator & $M$ & $r$ & uniform $\kappa_2$ &
Christoffel $\kappa_2$ & LSQR uniform/Christoffel & Rel. $L^2$ \\
\midrule
L-shape & Poisson & 400 & 337 & $6.7{\times}10^{10}$ & $6.8{\times}10^3$ &
2000/390 & $6.1{\times}10^{-4}$ \\
L-shape & Helmholtz $k=6$ & 400 & 300 & $7.7{\times}10^{11}$ &
$4.2{\times}10^1$ & 2000/124 & $2.2{\times}10^{-2}$ \\
L-shape & Variable coeff. & 400 & 341 & $8.2{\times}10^{10}$ &
$1.6{\times}10^4$ & 2000/409 & $1.2{\times}10^{-2}$ \\
L-shape & Convection--diffusion & 400 & 313 & $2.9{\times}10^{11}$ &
$1.4{\times}10^4$ & 2000/427 & $1.1{\times}10^{-3}$ \\
Annulus & Poisson & 400 & 323 & $1.2{\times}10^{11}$ & $6.5{\times}10^3$ &
2000/353 & $9.2{\times}10^{-5}$ \\
Annulus & Helmholtz $k=6$ & 400 & 289 & $1.6{\times}10^{12}$ &
$3.9{\times}10^1$ & 2000/114 & $8.4{\times}10^{-3}$ \\
Annulus & Variable coeff. & 400 & 319 & $1.6{\times}10^{11}$ &
$5.0{\times}10^3$ & 2000/292 & $6.1{\times}10^{-3}$ \\
Annulus & Convection--diffusion & 400 & 303 & $6.3{\times}10^{11}$ &
$1.7{\times}10^4$ & 2000/381 & $5.4{\times}10^{-4}$ \\
\bottomrule
\end{tabular}}
\end{table}

Figure~\ref{fig:solutions} complements the table with one field-level Poisson realization on a square,
an L-shaped domain, and an annulus. All three use residual-Christoffel rows and an SVD solve in the
full coefficient space.
The pointwise maps place the largest errors near the outer boundary of the square, the
reentrant geometry and exterior edges of the L-shape, and the two annular boundary components.

\begin{figure}[!htbp]\centering
\includegraphics[width=.98\textwidth]{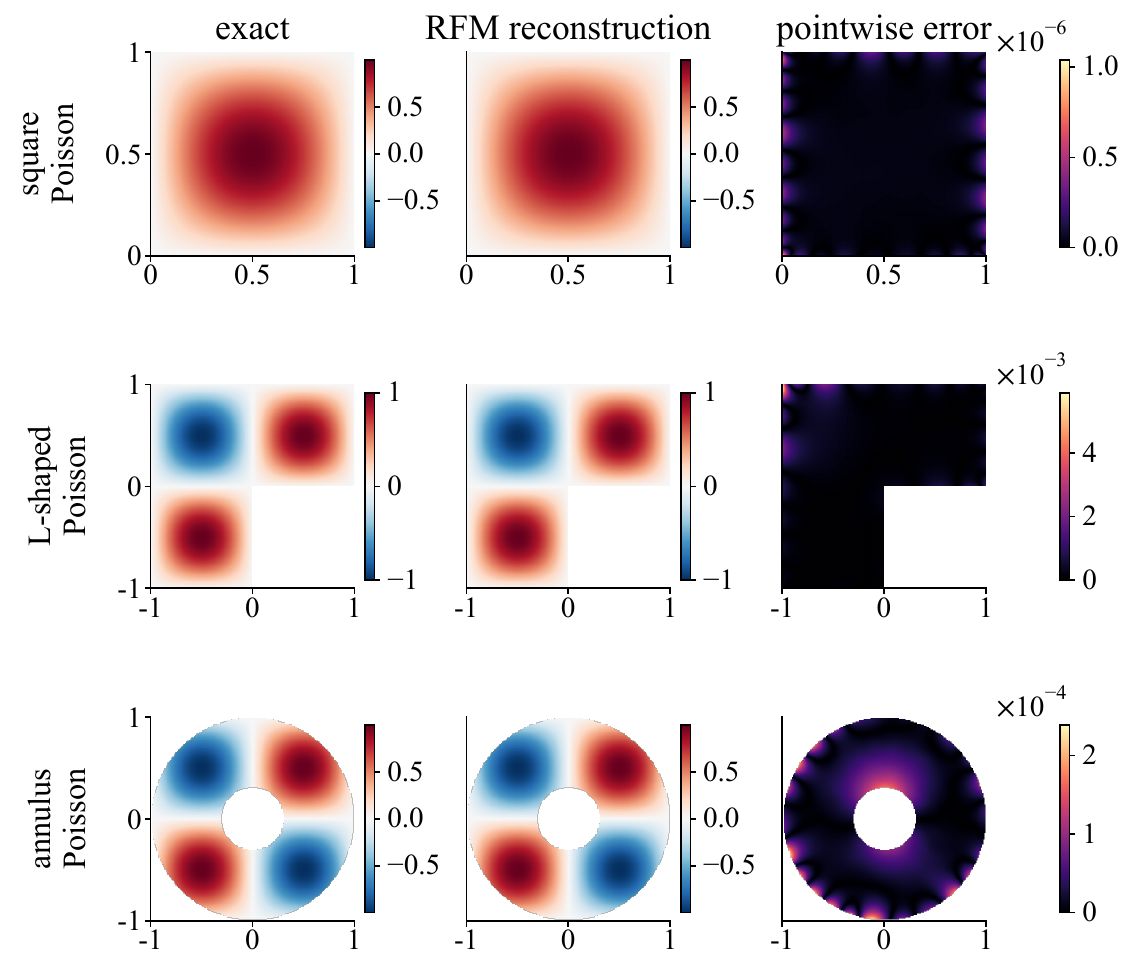}
\caption{Poisson fields for one residual-Christoffel realization. Columns show the exact field, RFM
reconstruction, and pointwise absolute error. Rows show the square ($M=400$, $N=1400$, $r=190$),
L-shape ($M=400$, $N=1400$, $r=337$), and annulus ($M=400$, $N=1400$, $r=323$). The corresponding
relative $L^2$ errors are $1.5\times10^{-7}$, $6.4\times10^{-4}$, and $7.5\times10^{-5}$.}
\label{fig:solutions}
\end{figure}

\subsection{Ablations and controls}
\noindent
This subsection presents controlled ablations that isolate five effects otherwise easily
conflated with residual-space sampling: boundary weighting, boundary-aware whitening, solver
preconditioning, column pruning after assembly, and trial-space deficiency. Except where explicitly
varied, the boundary convention is the rebalanced system with total boundary weight $\beta=1$.

The first controls concern the appended boundary block. Interior-only whitening stabilizes the
sampled residual rows; the condition number of the full matrix additionally depends on the boundary block.
Figure~\ref{fig:ablation}(a) varies only the augmented whitening Gram
$G_\alpha=(1-\alpha)G+\alpha G_b$. Incorporating the boundary Gram reduces the full-system condition
number by nearly three orders of magnitude while leaving the error unchanged. Boundary-aware
whitening therefore stabilizes the appended block without altering the collocation points or boundary
penalty.

The boundary penalty controls the relative strength of boundary enforcement.
Figure~\ref{fig:ablation}(b) and Table~\ref{tab:boundary-beta} show the same error-conditioning
tradeoff across the scalar operators: stronger boundary enforcement improves accuracy while increasing
the condition number. The greedy error varies by only about a factor of two over seven orders of
magnitude in its regularization parameter, as shown in panel (c). Panel (d) isolates whitening itself;
removing it returns the sampled system to numerical rank deficiency.

\begin{figure}[!htbp]\centering
\includegraphics[width=.98\textwidth]{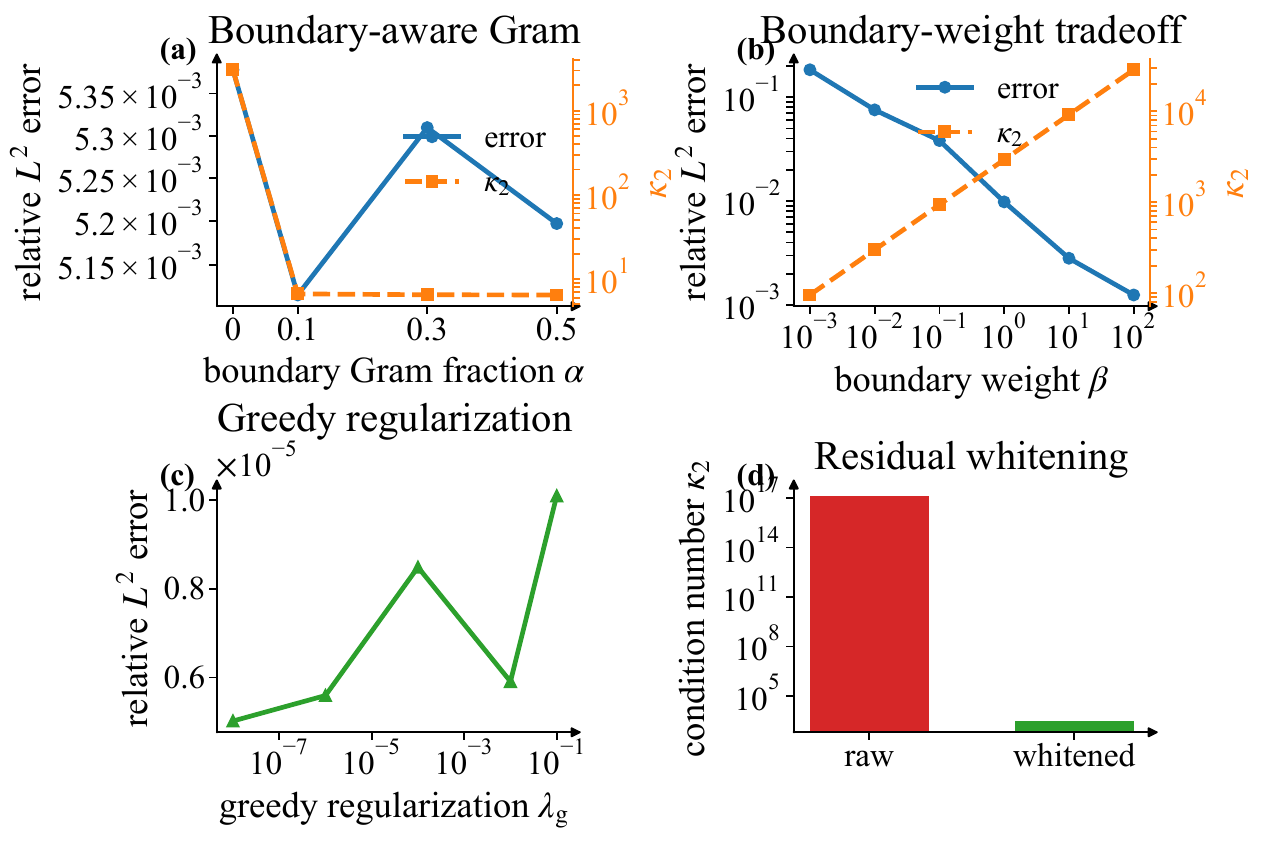}
\caption{Ablation tests on 2D Poisson. (a) Boundary-aware whitening varies $\alpha$ while holding the
collocation set, assembly weights, and $r=222$ fixed; the condition number decreases without a material error change.
(b) Increasing $\beta$ decreases the error but increases the condition number. (c) The greedy error varies by about
a factor of two over $\lambda_{\rm g}=10^{-8}$--$10^{-1}$. (d) Omitting residual whitening makes the
sampled matrix numerically rank deficient.}
\label{fig:ablation}
\end{figure}

\begin{table}[!htbp]\centering\scriptsize
\caption{Boundary-penalty sensitivity across scalar operators. Entries are median relative $L^2$
error / $\kappa_2$ over three residual-Christoffel point-selection realizations with $M=400$, $N=1500$, and
$N_b=300$.}
\label{tab:boundary-beta}
\setlength{\tabcolsep}{4pt}
\begin{tabular}{@{}lrrrr@{}}
\toprule
Operator & $r$ & $\beta=10^{-2}$ & $\beta=1$ & $\beta=10^2$ \\
\midrule
Poisson & 190 & $7.7{\times}10^{-2}$ / $4.1{\times}10^2$ &
$1.1{\times}10^{-2}$ / $3.8{\times}10^3$ &
$1.3{\times}10^{-3}$ / $3.6{\times}10^4$ \\
Helmholtz & 163 & $1.1{\times}10^{0}$ / $1.2{\times}10^1$ &
$6.5{\times}10^{-2}$ / $1.1{\times}10^2$ &
$5.2{\times}10^{-3}$ / $1.1{\times}10^3$ \\
Convection--diffusion & 177 & $2.7{\times}10^{-2}$ / $1.1{\times}10^3$ &
$1.5{\times}10^{-3}$ / $1.1{\times}10^4$ &
$3.0{\times}10^{-4}$ / $1.1{\times}10^5$ \\
\bottomrule
\end{tabular}
\end{table}

The next control fixes the collocation set and separates point selection, solver preconditioning, and
column pruning. Direct SVD supplies the accuracy reference for each row
set. Unpreconditioned LSQR, ridge-LSQR, and the least-squares solver with randomized normal projection
(LSRN) \cite{AvronToledo2010,MengSaundersMahoney2014} act on the assembled system; rank-revealing QR
factorization (RRQR), implemented by column-pivoted QR, changes the retained feature columns
\cite{RRQR2025}. Table~\ref{tab:competitive} reports the resulting accuracy, iterations, and algebraic
solve times.

Unpreconditioned and ridge-transformed LSQR reach the iteration cap, whereas LSRN converges in about
seventy iterations for both row sets. Thus solver preconditioning remains effective after either
collocation rule has been chosen. RRQR pruning to $k=r$ removes essential approximation directions;
retaining larger column sets recovers accuracy, with the nonmonotone errors confirming that column
identity matters as well as column count.

\begin{table}[!htbp]\centering\scriptsize
\caption{Fixed-collocation-set comparison on high-frequency Poisson ($M=1200$, $r=427$,
$N=1068\approx2.5r$).
Methods act through point selection, solver preconditioning, or feature pruning. Times exclude point selection, assembly,
and error evaluation; LSRN and RRQR include their listed algebraic steps, while ridge-transform
construction is excluded from ridge-LSQR. Times are single-run measurements of the listed stages.}
\label{tab:competitive}
\setlength{\tabcolsep}{2.2pt}
\begin{tabular}{@{}p{0.30\textwidth}p{0.22\textwidth}ccc@{}}
\toprule
Method & Component varied & rel.\ $L^2$ error & LSQR iters & time (s) \\
\midrule
uniform + direct SVD & SVD reference & $4.1\times10^{-5}$ & -- & 0.21 \\
uniform + LSQR & LSQR solve & $2.9\times10^{-2}$ & 3000 & 0.42 \\
uniform + LSRN & solver precond. & $2.0\times10^{-4}$ & 68 & 0.41 \\
uniform + ridge-LSQR & ridge solve & $1.0\times10^{-3}$ & 3000 & 0.42 \\
uniform + RRQR, $k=r$ & feature pruning & $1.6\times10^{-2}$ & -- & 0.60 \\
uniform + RRQR, \mbox{$k=1.5r$} & feature pruning & $8.5\times10^{-5}$ & -- & 0.84 \\
uniform + RRQR, $k=2r$ & feature pruning & $1.2\times10^{-4}$ & -- & 0.80 \\
Christoffel + direct SVD & point selection & $1.6\times10^{-5}$ & -- & 0.25 \\
Christoffel + ridge-LSQR & selection + ridge map & $1.5\times10^{-3}$ & 3000 & 0.41 \\
Christoffel + LSRN & selection + LSRN & $1.8\times10^{-4}$ & 70 & 0.43 \\
\bottomrule
\end{tabular}
\end{table}

The RRQR rows show one side of the approximation-space issue: removing too many feature columns can
destroy accuracy even when the collocation set is fixed. A complementary control examines whether
point selection can
compensate for a missing solution direction. On the singular L-shaped problem, a smooth RFM with
$8000$ interior points leaves a relative $L^2$ error of $1.4\times10^{-2}$, whereas appending the known
corner-singularity function reaches the numerical precision of the solve. This enrichment separates
trial-space deficiency from point placement: residual leverage stabilizes represented directions,
whereas enrichment expands the approximation space.

\subsection{Computational regime and approximate leverage construction}
Figure~\ref{fig:scaling-solvers} compares dense post-assembly solvers on the high-frequency Poisson
benchmark. Direct SVD remains the fastest and most accurate method over the displayed feature range.
Among the iterative methods, LSRN is fastest and most accurate. Unpreconditioned LSQR and ridge-LSQR
are slower and less accurate. The residual-Christoffel curve reports ridge-LSQR on the selected rows.

\begin{figure}[!htbp]\centering
\includegraphics[width=.98\textwidth]{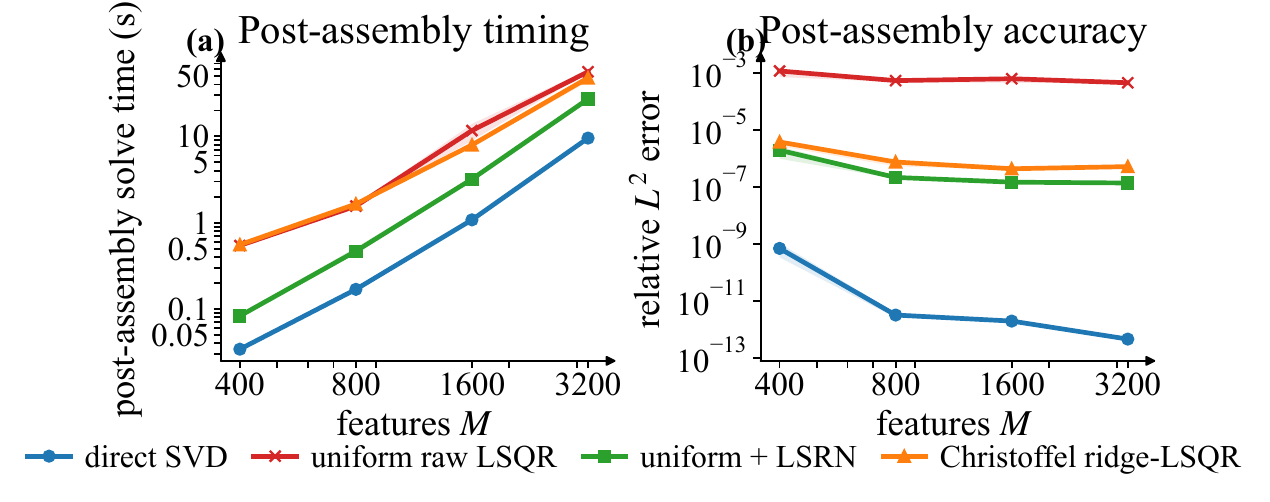}
\caption{Post-assembly methods on high-frequency Poisson with $N=2M$. Direct SVD, unpreconditioned LSQR, and LSRN
use uniform rows; ridge-LSQR uses residual-Christoffel rows. Panels show solve time and relative $L^2$
error, summarized by medians and interquartile bands over five independent feature, quadrature, and
sampling realizations. Timings exclude leverage construction, point selection, boundary construction,
matrix assembly, and error evaluation. Direct
SVD remains fastest and most accurate throughout the displayed dense regime.}
\label{fig:scaling-solvers}
\end{figure}

Figure~\ref{fig:sketched-leverage} compares dense Gram eigendecomposition with the randomized
range-finder construction from Section~\ref{sec:design-cost}. Beyond the observed crossover, the sketch
reduces both construction time and dominant-array storage, while its smaller retained residual subspace
produces higher PDE error. The crossover therefore exposes a direct cost-accuracy tradeoff in leverage
construction.

\begin{figure}[H]\centering
\includegraphics[width=.80\textwidth]{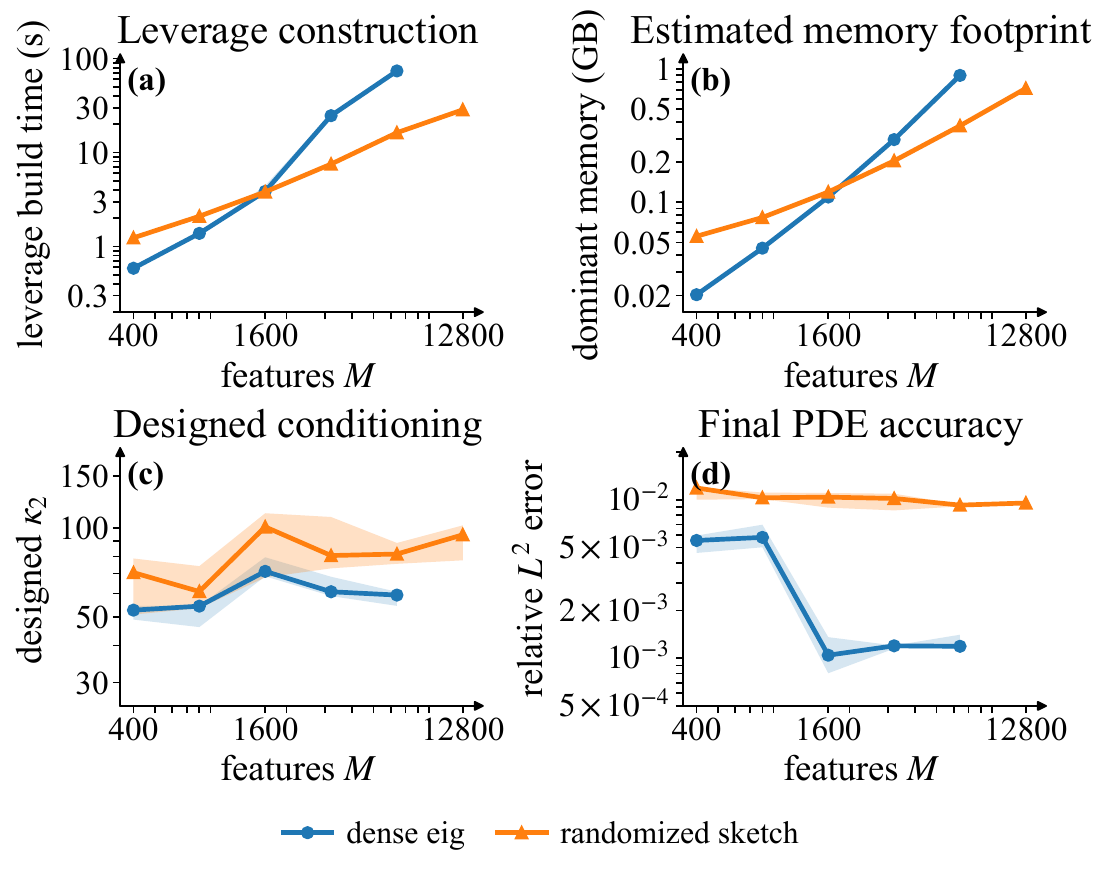}
\caption{Sketched residual-leverage construction. The randomized range finder is applied to
$R_Q=W^{1/2}\Psi_Q$ and compared with dense Gram eigendecomposition. Curves show medians over five
realizations; panel (b) reports the dominant-array storage estimate. For $M\ge3200$, where both
constructions were run, the sketch lowers construction time and dominant-array storage but retains a
smaller residual subspace and gives higher PDE error.}
\label{fig:sketched-leverage}
\end{figure}

\FloatBarrier
\section{Conclusion and outlook}\label{sec:discussion}

This paper developed an operator-aware collocation strategy for linear PDEs in fixed random feature
spaces. The operator-residual Gram defines the residual-Christoffel density, inverse-density weights,
and coefficient whitening map; a deterministic scalar-row companion maximizes regularized
log-determinant increments. Conditional on the trial space, the randomized construction approximates
the whitened residual Gram with $O(r\log(r/\delta)/\varepsilon^2)$ samples. For uniformly analytic
residual kernels, spectral decay yields a polylogarithmic ridge effective dimension. Experiments across
scalar and vector PDEs, varied geometries, and one to three dimensions show substantially improved
conditioning and iterative convergence, together with the lowest small-sample scalar errors from the
greedy construction. These results establish residual-space geometry as the organizing principle for
stable strong-form random feature collocation.

Natural extensions combine localized or enriched trial spaces with operator-aware sampling for
nonsmooth and multiscale solutions, use streamed residual-Gram factorizations for larger feature sets,
and employ block leverage with augmented interior-boundary Grams for coupled or parameterized PDEs.
Each direction preserves the principle of resolving the operator-induced geometry of the trial space.

\appendix

\section{Proofs of the spectral-decay results}\label{app:kernel}
This appendix collects the proofs of the kernel-spectral results of Section~\ref{sec:effdim}.

\begin{proof}[Proof of Theorem~\ref{thm:oplim}]
$S_M=\Phi_M\Phi_M^*$ is cospectral with $G=\Phi_M^*\Phi_M$, and
$M^{-1}S_M$ averages i.i.d.\ PSD operators
$X_m=\zeta_{\theta_m}\otimes\zeta_{\theta_m}$ with $\E X_m=\mathcal T$ and
$\|X_m\|_{\mathrm{HS}}=\|\zeta_{\theta_m}\|_H^2\le R_\zeta$, where
$\|\cdot\|_{\mathrm{HS}}$ is the Hilbert--Schmidt norm. Independence and centering give
\begin{align*}
  \E\left\|M^{-1}S_M-\mathcal T\right\|_{\mathrm{HS}}^2
  &=\frac1M\E\|X_1-\mathcal T\|_{\mathrm{HS}}^2\\
  &=\frac1M\left(\E\|X_1\|_{\mathrm{HS}}^2
    -\|\mathcal T\|_{\mathrm{HS}}^2\right)\\
  &\le\frac1M\E\|\zeta_{\theta_1}\|_H^4
  \le\frac{R_\zeta\,\mathrm{tr}(\mathcal T)}{M}.
\end{align*}
Since the operator norm is bounded by the Hilbert--Schmidt norm, Markov's inequality yields
\[
  \Pr\!\left[\|M^{-1}S_M-\mathcal T\|_2>\varepsilon_{\rm op}\tau_1\right]
  \le \frac{R_\zeta\,\mathrm{tr}(\mathcal T)}{M\varepsilon_{\rm op}^2\tau_1^2}.
\]
Condition \eqref{eq:operator-limit-sample-condition} makes this probability at most $\delta$.
The compact self-adjoint eigenvalue perturbation inequality transfers the operator-norm bound to the
eigenvalues, with the finite-rank spectrum of $M^{-1}S_M$ padded by zeros. Finally, the
Hilbert--Schmidt operator space over the separable space $H=L^2(\mu)$ is itself separable, and
$\E\|X_1\|_{\mathrm{HS}}<\infty$. The Hilbert-space strong law therefore gives
$\|M^{-1}S_M-\mathcal T\|_{\mathrm{HS}}\to0$ almost surely, which proves the stated limiting claim.
\end{proof}

\begin{proof}[Proof of Theorem~\ref{thm:decay}]
Analyticity is needed only in the first kernel variable. The uniform continuation gives a
tensor-product Chebyshev expansion of
$k_L(\cdot,\bx')$ on $D$, with coefficients bounded uniformly in $\bx'$. Truncating the expansion at
degree $n$ in each coordinate produces a kernel, where $q_\alpha$ is a tensor-product Chebyshev
polynomial and $c_\alpha$ is its coefficient function in the second kernel variable,
\[
  k_{L,n}(\bx,\bx')
  =\sum_{\alpha\in\{0,\ldots,n\}^d}q_\alpha(\bx)c_\alpha(\bx')
\]
such that
\[
  \sup_{\bx,\bx'\in\Omega}|k_L(\bx,\bx')-k_{L,n}(\bx,\bx')|
  \le C_0e^{-c_0n},
\]
where $C_0,c_0>0$ depend only on the analytic extension in the theorem. The integral operator
$\mathcal T_n$ associated with $k_{L,n}$ has range in the span of the
$(n+1)^d$ tensor-product polynomials $q_\alpha$, so
$\mathrm{rank}(\mathcal T_n)\le(n+1)^d$. Because $\mu$ is a probability measure,
\[
  \|\mathcal T-\mathcal T_n\|_2
  \le\|k_L-k_{L,n}\|_{L^2(\mu\times\mu)}
  \le C_0e^{-c_0n}.
\]
The approximation-number characterization of the eigenvalues of a compact PSD operator therefore
gives $\tau_{(n+1)^d+1}\le C_0e^{-c_0n}$. Choosing $n$ comparable to $j^{1/d}$ and enlarging the
constant to cover the finitely many leading indices yields
$\tau_j\le C_1e^{-c_1j^{1/d}}$.
\end{proof}

\begin{proof}[Proof of Corollary~\ref{cor:deff}]
Set
\[
  j^\star=\max\!\left\{1,
  \left\lceil\left(c_1^{-1}\log(C_1/\ell)\right)^d\right\rceil\right\}.
\]
Here $\lceil\cdot\rceil$ denotes the ceiling function.
The first $j^\star$ terms contribute at most $j^\star$. For $j>j^\star$,
$\tau_j/(\tau_j+\ell)\le\tau_j/\ell$, and monotonicity gives
\[
  \sum_{j>j^\star}\frac{\tau_j}{\ell}
  \le \frac{C_1}{\ell}\int_{j^\star}^{\infty}e^{-c_1t^{1/d}}\,dt
  =\frac{C_1d}{\ell}\int_{(j^\star)^{1/d}}^\infty
    u^{d-1}e^{-c_1u}\,du.
\]
Repeated integration by parts, or the elementary exponential-tail bound for a fixed polynomial,
shows that the last integral is at most
$C e^{-c_1a}(1+a^{d-1})$ with $a=(j^\star)^{1/d}$ and a constant $C>0$ depending only on $c_1$ and
$d$. By the definition of $j^\star$,
$(C_1/\ell)e^{-c_1a}\le1$. Hence the tail is
$O((1+\log(C_1/\ell))^{d-1})$, while the head is
$O((1+\log(C_1/\ell))^d)$, which proves the claim.
\end{proof}

\section*{Acknowledgements}
This work was carried out during Jiale Linghu's visit to Professor Weizhu Bao at the National
University of Singapore. Both authors thank Professor Bao for his valuable advice and helpful
discussions.

\section*{Data availability}
An initial code release is publicly available in the RC-RFM repository at
\url{https://github.com/linghujiale/RC-RFM}. The complete experiment scripts and numerical records
supporting this manuscript will be deposited in the same repository before publication and are
available from the corresponding author on reasonable request.

\section*{Funding}
The authors declare that no funds, grants, or other support were received during the preparation of
this manuscript.

\section*{Competing interests}
The authors have no relevant financial or non-financial interests to disclose.

\section*{Ethics approval}
Not applicable. This article does not contain studies with human participants or animals performed by
any of the authors.

\section*{Author contributions}
Jiale Linghu and Yangshuai Wang contributed equally to the conceptualization, methodology, formal
analysis, investigation, validation, numerical implementation, interpretation of results, and manuscript
writing. Both authors reviewed, edited, and approved the final manuscript.


\end{document}